\numberwithin{equation}{section}
\newtheorem{theorem}{Theorem}
\newtheorem{lemma}{Lemma}[section]
\newtheorem{remark}[lemma]{Remark}
\newtheorem{proposition}{Proposition}
\newtheorem{notation}[lemma]{Notation}
\newtheorem{definition}[lemma]{Definition}
\newtheorem{corollary}[lemma]{Corollary}
\renewcommand{\leq}{\leqslant}
\renewcommand{\geq}{\geqslant}
\newcommand*{\IfItalic}{%
  \ifx\f@shape\my@test@it
    \expandafter\@firstoftwo
  \else
    \expandafter\@secondoftwo
  \fi
}
\newcommand*{\my@test@it}{it}
\newcommand{\myae}{\IfItalic{\emph{\mbox{\ae}}}{\mbox{\ae}}}
\newcommand\XLast{}
\newcommand\YLast{}
\newcommand\Vidr[4]{
\qbezier(#1,#2)(#1,#2)(#3,#4)
\renewcommand\XLast{#3}
\renewcommand\YLast{#4}}
\newcommand\VidrTo[2]{\Vidr{\XLast}{\YLast}{#1}{#2}}
\newcommand\ssemg\psi
\newcommand\fkinkg\gamma
\newcommand{\OneOp}{\mathbbm 1}
\begin{document}

\begin{center}
{\Large Piecewise linear unimodal maps\\ with non-trivial
continuous piecewise linear commutator}

{\large Makar Plakhotnyk\\
University of S\~ao Paulo, Brazil.\\

makar.plakhotnyk@gmail.com}\\
\end{center}

\begin{abstract}
Let $g:\, [0, 1]\rightarrow [0, 1]$ be piecewise linear unimodal
map. We say that $g$ has non-trivial piecewise linear commutator,
if there is a continuous  piecewise linear $\psi:\, [0,
1]\rightarrow [0, 1]$ such that $g\circ \psi = \psi\circ g$, and,
moreover, $\psi$ is neither an iteration of $g$, not a constant
map.

We prove that if $g$ has a non-trivial piecewise linear
commutator, then $g$ is topologically conjugated with the tent map
by a piecewise linear conjugacy.~\footnote{This work is partially
supported by FAPESP (S\~ao Paulo, Brazil).}~\footnote{AMS subject
classification:
37E05  
}
\end{abstract}

\section{Introduction}

We will call a continuous map $g: [0, 1]\rightarrow [0, 1]$
\textbf{unimodal}, if it can be written in the form
\begin{equation}\label{eq:1.1} g(x) = \left\{
\begin{array}{ll}g_l(x),& 0\leq x\leq
v,\\
g_r(x), & v\leq x\leq 1,
\end{array}\right.
\end{equation} where %
$v\in (0,\, 1)$ is a parameter, the function $g_l$ is increasing,
the function $g_r$ is decreasing, and $$g(0)=g(1)=1-g(v)=0.$$

The fundamental example of unimodal map is the \textbf{tent} map
$$ f:\, x\mapsto 1-|1-2x|.
$$

\begin{definition}
Continuous surjective solution $\eta$ of the function equation $$
\eta \circ f = g\circ
\eta,$$ where %
$f$ is the tent map and $g$ is unimodal map, is called
\textbf{semi conjugation} from $f$ to $g$.
\end{definition}

\begin{definition}
Let $g$ be unimodal map. We will call a continuous surjective
solution $\psi$ of the functional equation
\begin{equation}\label{eq:1.2} \psi \circ g = g\circ \psi,
\end{equation} a \textbf{self semi conjugation} of
$g$.
\end{definition}

We will prove the next Theorem, which permits to reduce the number
of maximal parts of monotonicity of the map, which commutes with
the Tent map.

\begin{proposition}\label{th-5}
Let $\psi:\, [0, 1]\rightarrow [0, 1]$ be a piecewise linear
non-constant map such that~\eqref{eq:1.2} holds. If $\psi$ has
$2t$ maximal intervals of monotonicity, then there exists
non-constant piecewise linear $\widetilde{\psi}:\, [0,
1]\rightarrow [0, 1]$, which has $t$ maximal intervals of
monotonicity, and
\begin{equation}\label{eq:1.3} \widetilde{\psi}\circ g = g\circ
\widetilde{\psi}.
\end{equation}
\end{proposition}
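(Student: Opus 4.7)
The main idea is to exploit the natural piecewise linear involution $\sigma:[0,1]\to[0,1]$ associated with the unimodal map $g$, defined by $\sigma(x)=g_r^{-1}(g_l(x))$ on $[0,v]$ and $\sigma(x)=g_l^{-1}(g_r(x))$ on $[v,1]$. This $\sigma$ is continuous, piecewise linear, strictly decreasing, is an involution with $\sigma(v)=v$ and $\sigma(0)=1$, and by construction satisfies $g\circ\sigma=g$. The plan is first to show that $\psi$ is $\sigma$-symmetric, i.e.\ $\psi\circ\sigma=\psi$, and then to descend $\psi$ through $g$ to obtain $\widetilde\psi$ with $t$ pieces.

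For the symmetry, the commutation $\psi\circ g=g\circ\psi$ and $g\circ\sigma=g$ give $g(\psi(\sigma(x)))=g(\psi(x))$, so pointwise $\psi(\sigma(x))\in\{\psi(x),\sigma(\psi(x))\}$. The two closed sets $A=\{x:\psi(\sigma(x))=\psi(x)\}$ and $B=\{x:\psi(\sigma(x))=\sigma(\psi(x))\}$ cover $[0,1]$ with $A\cap B=\psi^{-1}(v)$, and the goal is to show $A=[0,1]$. The relation $\psi(0)=g(\psi(0))$ shows that $\psi(0)$ is a fixed point of $g$; the $2t$-piece hypothesis (the first and last monotone pieces have opposite monotonicity, since $2t$ is even) combined with $0\le\psi\le 1$ rules out the configurations with $\psi(0)\ne\psi(1)$, placing both endpoints in $A$. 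A count of monotone pieces of $\psi\circ g$ and $g\circ\psi$, which must agree, yields $|\psi^{-1}(v)|=2t$; the $\sigma$-invariance of $\psi^{-1}(v)$ together with the fact that $v$ is the only $\sigma$-fixed point then force $\psi(v)\ne v$ and pair the $2t$ points of $\psi^{-1}(v)$ into $t$ $\sigma$-pairs. The final step rules out any genuine $B$-region: a $B$-region together with its $\sigma$-image would contribute pieces of equivariant type, which force an odd piece count on its side, incompatible with the total $2t$ and the $A$-type already established at the endpoints.

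Once $\psi\circ\sigma=\psi$ is established, define
\begin{equation*}
\widetilde\psi(y)=\psi(g_l^{-1}(y)),\qquad y\in[0,1].
\end{equation*}
This is well defined independently of the choice of $g$-preimage of $y$, because $g(x_1)=g(x_2)$ implies $x_2=\sigma(x_1)$, whence $\psi(x_1)=\psi(x_2)$ by symmetry. The map $\widetilde\psi$ is continuous and piecewise linear as a composition of such maps. A direct check gives $\widetilde\psi\circ g=\psi$ (immediate on $[0,v]$; on $[v,1]$ from $g_l^{-1}\circ g_r=\sigma$ and the symmetry) and $g\circ\widetilde\psi=g\circ\psi\circ g_l^{-1}=\psi\circ g\circ g_l^{-1}=\psi$, so $\widetilde\psi\circ g=g\circ\widetilde\psi$, which is \eqref{eq:1.3}. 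The monotone pieces of $\widetilde\psi$ on $[0,1]$ are in bijection with those of $\psi$ on $[0,v]$ via the monotone linear bijection $g_l^{-1}$; since $\sigma$-symmetry forces $v$ to be a critical point of $\psi$, the map $\psi$ has exactly $t$ pieces on $[0,v]$, and therefore $\widetilde\psi$ has $t$ pieces. Non-constancy is inherited from $\psi$.

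The main obstacle is the symmetry lemma: the pointwise dichotomy between the symmetric and equivariant alternatives can a priori switch each time $\psi$ crosses the level $v$, and one must use the global evenness of the piece count together with the $\sigma$-pairing of $\psi^{-1}(v)$ and of the critical points of $\psi$ to exclude all such switches.
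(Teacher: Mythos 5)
Your construction is genuinely different from the paper's, and the half of it that you carry out in full is correct and cleaner. The paper builds $\widetilde\psi$ as $g^{-1}\circ\psi$, choosing an inverse branch $g_1^{-1}$ or $g_2^{-1}$ on each interval $I_{4k+p}$ according to $p\bmod 4$, and then verifies $\widetilde\psi\circ g=g\circ\widetilde\psi$ by index bookkeeping (its Lemmas 3.7--3.9); you instead build $\widetilde\psi=\psi\circ g_l^{-1}$, for which well-definedness and commutation reduce to the symmetry $\psi\circ\sigma=\psi$ with $\sigma=g_r^{-1}\circ g_l$ (indeed $g\circ\widetilde\psi=\psi\circ g\circ g_l^{-1}=\psi=\widetilde\psi\circ g$ in two lines), and the piece count drops from $2t$ to $t$ because the symmetry forces $v$ to be a critical point of $\psi$. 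That is a real simplification of the second half of the argument: your symmetry lemma absorbs all the casework that the paper distributes over Lemmas 3.5--3.9.

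The gap is that the symmetry lemma, which you correctly identify as the whole difficulty, is not actually proved. Two steps in your sketch do not stand as written. First, ``$0\le\psi\le1$ rules out the configurations with $\psi(0)\ne\psi(1)$'' does not dispose of the case $\psi(0)=x_0$ (the positive fixed point of $g$), which is compatible with $\psi(0)=\psi(1)$ at the endpoints but destroys the structure you rely on afterwards (each monotone piece surjecting onto $[0,1]$, hence $|\psi^{-1}(v)|=2t$); excluding it is the content of the paper's Lemma~3.3 and requires a genuine argument, not a parity remark. Second, the exclusion of $B$-regions via ``an odd piece count on its side'' is not an argument: the dichotomy $\psi\circ\sigma\in\{\psi,\ \sigma\circ\psi\}$ can a priori switch at every point of $\psi^{-1}(v)$, of which there are $2t$, so a global parity count of pieces does not by itself forbid a switch. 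The fillable route is the one the paper's structural lemmas supply: once each monotone piece of $\psi$ maps onto $[0,1]$, the partition points satisfy $\sigma(\mu_{2,k})=\mu_{2,n-k}$ and, since $n=2t$ is even, $\psi(\mu_{2,n-k})=\psi(\mu_{2,k})\in\{0,1\}$; because the alternative $\sigma\circ\psi$ swaps the values $0$ and $1$ while $\sigma$ fixes only $v$, the $B$-alternative is excluded on every half-piece $I_{k,s}$ whose closure meets some $\mu_{2,k}$ --- and every half-piece does. With that lemma supplied, your proof is complete and shorter than the paper's; without it, the key step is asserted rather than proved.
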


The main result of this work is the next

\begin{theorem}\label{th:1}
Let $g$ be a unimodal map. If there exists a non-trivial piecewise
linear commutator of $g$, then $g$ is topologically conjugated
with the Tent map and, moreover, the conjugacy is piecewise
linear.
\end{theorem}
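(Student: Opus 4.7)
The plan is to use Proposition~\ref{th-5} as an iterative reduction tool. Starting from the given non-trivial piecewise linear commutator $\psi$ with $N$ maximal intervals of monotonicity, I would apply Proposition~\ref{th-5} repeatedly, halving the number of intervals at each step, until I obtain a non-constant piecewise linear commutator $\widetilde{\psi}$ whose number $m$ of maximal monotonicity intervals is odd. A preliminary check is that at some stage of the reduction one can arrange $\widetilde{\psi}$ to be not only non-constant but also not an iterate of $g$; this uses the non-triviality of the original $\psi$ together with the fact that $g^{k}$ has exactly $2^{k}$ maximal monotonicity pieces, so for odd $m \geq 3$ it is automatic, while for $m=1$ one must argue that if all reductions collapsed into iterates of $g$, then $\psi$ itself would already be an iterate.

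The next step is to analyze the reduced commutator $\widetilde{\psi}$. In the extreme case $m = 1$, $\widetilde{\psi}$ is a monotone piecewise linear surjection of $[0,1]$, hence a piecewise linear homeomorphism satisfying $\widetilde{\psi}\circ g = g\circ\widetilde{\psi}$. I would exploit two ingredients: the boundary values $\widetilde{\psi}(0)$ and $\widetilde{\psi}(1)$ must be fixed points of $g$ (so they belong to $\{0,p\}$, where $p$ is the non-zero fixed point), and the turning point $v$ of $g$ must map under $\widetilde{\psi}$ to a $g$-preimage of $\widetilde{\psi}(1)$. Combining these with the local linearity of $\widetilde{\psi}$ pins down the slopes of $g$ on each monotonicity piece, via the slope identity obtained by differentiating $\widetilde{\psi}\circ g = g\circ\widetilde{\psi}$ at generic interior points. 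The odd case $m\geq 3$ is handled analogously: the orbit under $g$ of the breakpoints of $\widetilde{\psi}$ forms a finite $g$-invariant set, inducing a partition of $[0,1]$ on which $g$ is forced to be piecewise linear with uniform absolute slope.

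From the finite $g$-invariant partition produced above, I would explicitly construct the desired piecewise linear conjugacy $h\colon[0,1]\to[0,1]$: map the ordered breakpoints of the partition to the correspondingly-ordered dyadic points in the analogous partition of $[0,1]$ by preimages of $v$ under iterates of the tent map, and extend linearly on each subinterval. The commutation relation for $\widetilde{\psi}$, combined with the uniform slope property of $g$, then translates into $h\circ g = f\circ h$ by direct piecewise computation. The main obstacle I anticipate is the extraction, from the abstract commutation $\widetilde{\psi}\circ g = g\circ\widetilde{\psi}$ together with piecewise linearity, of the quantitative slope information showing that both branches of $g$ carry the same absolute slope (which, after the change of coordinates, becomes $2$). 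This slope-matching is where the piecewise-linear hypothesis enters decisively and is the genuine content of the theorem; the rest reduces to combinatorial bookkeeping of breakpoints and their $g$-orbits.
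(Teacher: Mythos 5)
Your plan diverges from the paper's and, more importantly, leaves the two essential steps unproved. First, nothing in your proposal establishes that $g$ is topologically conjugate to the tent map at all. By Theorem~\ref{th:2} this amounts to showing that $g^{-\infty}(0)$ is dense in $[0,1]$, and this is exactly where the hypothesis that $\psi$ is piecewise linear \emph{and not an iterate of $g$} must enter: the paper supposes a gap $Z$ disjoint from $g^{-\infty}(0)$, pulls it back into a neighbourhood of $0$ on which both $g$ and $\psi$ are linear, and observes that in the coordinate $\nu(x)=\frac{1}{x_k}\left\{\log_{g'(0)}x\right\}$ the branch $g_l^{-1}$ acts trivially while $\psi_l^{-1}$ acts as an irrational rotation, whose dense orbits contradict the existence of the gap. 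Your breakpoint-matching construction of $h$ (sending the breakpoints of a finite $g$-invariant partition to dyadic points and extending linearly) produces at best a semi-conjugacy unless this density is already known; a finite invariant partition cannot detect whether $g$ has an interval whose orbit avoids $0$.

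Second, the step you yourself flag as ``the genuine content'' --- extracting slope information from $\widetilde\psi\circ g=g\circ\widetilde\psi$ --- is not carried out, and the route you sketch for it is flawed. In your case $m=1$ the reduced commutator is a monotone homeomorphism commuting with $g$; but the identity is such a map for every $g$, so a monotone commutator by itself pins down nothing, and a \emph{non-trivial} monotone commutator exists precisely in the degenerate situation (non-dense preimage of $0$) that must be excluded rather than exploited. The paper instead proves Proposition~\ref{prop:1}: it classifies all commutators as $h\circ\xi_t\circ h^{-1}$ (Corollary~\ref{cor:3.14}), derives the scaling relations $h(2x)=g'(0)\,h(x)$ and $h(tx)=\psi_t'(0)\,h(x)$ near $0$, shows via Lemma~\ref{lema:3.19} and a further irrational-rotation density argument that the only solution is $h(x)=\omega\, x^{\log_2 g'(0)}$, and then invokes Theorem~\ref{th:4} to upgrade local differentiability of $h$ to global piecewise linearity. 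Note finally that the paper does not use Proposition~\ref{th-5} in the proof of Theorem~\ref{th:1} at all; replacing the quantitative scaling argument by the halving reduction and combinatorial bookkeeping of breakpoints does not suffice to reach the conclusion.
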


We will first prove Theorem~\ref{th:1} for the case when $g$ is
topologically conjugated with the tent map.

\begin{proposition}\label{prop:1}
Let $g$ be a unimodal map, which is topologically conjugated with
the tent map. If there exists a non-trivial piecewise linear
commutator of $g$, then the conjugacy of $g$ and the tent map is
piecewise linear.
\end{proposition}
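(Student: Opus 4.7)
The plan is to transfer $\psi$ through the conjugacy to a continuous commutator of the tent map $f$, identify its form, and use the piecewise linear structure on both sides to extract functional equations that force the conjugacy to be piecewise linear.

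Let $h:[0,1]\to[0,1]$ be the increasing topological conjugacy with $g\circ h=h\circ f$, and set $\widetilde{\psi}:=h^{-1}\circ\psi\circ h$. Then $\widetilde{\psi}$ is continuous, non-constant, commutes with $f$, and is not an iterate of $f$ (otherwise $\psi=h\circ f^n\circ h^{-1}=g^n$ would contradict the non-triviality of $\psi$). The next step is to identify $\widetilde{\psi}$ among the continuous commutators of $f$. Via the standard semi-conjugacy $\pi(x)=1-|1-2x|$ from the doubling map $D(x)=2x\bmod 1$ to $f$, each positive integer $k$ produces a piecewise linear self-commutator $\varphi_{k}:=\pi\circ D_k\circ\pi^{-1}$ of $f$, where $D_k(x)=kx\bmod 1$; this $\varphi_{k}$ has $k$ affine pieces of slopes $\pm k$ over the uniform partition $\{j/k\}_{j=0}^{k}$, and $\varphi_{2^n}=f^n$. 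By applying Proposition~\ref{th-5} iteratively to reduce the number of monotonicity intervals of $\psi$ until a minimal residual form appears, and invoking the classification of the continuous centralizer of $f$ (whose non-constant elements are exactly the $\varphi_{k}$'s), we conclude that $\widetilde{\psi}=\varphi_{k}$ for some $k\geq 3$ not a power of $2$.

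From $\widetilde{\psi}=\varphi_{k}$ and the relation $\psi\circ h=h\circ\varphi_{k}$, the piecewise linearity of $\psi$ translates, on each interval $[j/k,(j+1)/k]$ on which $\varphi_{k}$ is affine, into linear functional equations of the form
\[
h(\pm k y+c)=\alpha\, h(y)+\beta
\]
on sub-intervals, where the constants depend on the linear piece of $\psi$ containing $h(y)$. Combined with the analogous equations $h(\pm 2y+c')=\alpha'\,h(y)+\beta'$ coming from $g=h\circ f\circ h^{-1}$ being piecewise linear, they constitute a self-similar scaling system for $h$. Iterating these identities along the orbits of $\varphi_{k}$ and $f$, which are dense in $[0,1]$ by the topological mixing of $f$, together with the boundary data $h(0)=0$, $h(1)=1$, propagates linearity to a dense subset of $[0,1]$, so by continuity $h$ is piecewise linear.

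The main obstacle is the classification step: rigorously showing that $\widetilde{\psi}$ must coincide with some $\varphi_{k}$. This is where Proposition~\ref{th-5} plays its essential role, by successively halving the number of monotonicity intervals of $\psi$ until its form on the tent side becomes unambiguous; one must also verify at each reduction that non-triviality is preserved, so that the process does not terminate at an iterate or a constant. Once this is in hand, the deduction of piecewise linearity of $h$ from the functional equations, while requiring careful bookkeeping, is a propagation argument exploiting the expansivity and topological mixing of $f$.
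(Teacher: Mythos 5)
Your opening move is the paper's: conjugate $\psi$ to a commutator of the tent map and identify it, via the classification of the continuous centralizer of $f$, with the explicit map of slopes $\pm t$ (the paper's $\xi_t$, your $\varphi_k$), with $t$ not a power of $2$ by non-triviality. Note that this classification is simply Theorem~\ref{th:3}, a cited prior result valid for all \emph{continuous} self semi conjugacies; it is not the "main obstacle" and does not need to be rebuilt from Proposition~\ref{th-5}, which plays no role in the paper's proof of this proposition.

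The genuine gap is in your last step. The functional equations you extract near $0$ are the scaling relations $h(2x)=g'(0)\,h(x)$ and $h(tx)=\psi_t'(0)\,h(x)$, and these do \emph{not} "propagate linearity": their general continuous solution is $h(x)=x^{\log_2 g'(0)}\,\omega(\log_2 x)$ with $\omega$ of period $1$, and the second relation (after one proves $\psi_t'(0)=(g'(0))^{\log_2 t}$, which itself requires the paper's Lemmas~\ref{lema:3.15}--\ref{lema:3.19}, including a density argument for the orbit of an irrational rotation) only forces $\omega$ to be constant. What you obtain is therefore $h(x)=c\,x^{\log_2 g'(0)}$ on an interval $[0,x^*]$ --- a power law, not a linear function, and certainly not piecewise linearity of $h$ on all of $[0,1]$. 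The paper closes this with a separate rigidity result, Theorem~\ref{th:4}: a conjugacy between a piecewise linear unimodal map and the tent map that is continuously differentiable on \emph{some} subinterval must be piecewise linear. Your proposal has no substitute for this step; "linearity on a dense subset plus continuity" is not available, because the equations never give linearity on any set in the first place. Without an analogue of Theorem~\ref{th:4} (or an independent argument forcing $\log_2 g'(0)=1$ and then spreading linearity over $[0,1]$ by the dynamics), the argument does not reach the conclusion.
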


The we will use Proposition~\ref{prop:1} to prove
Theorem~\ref{th:1}.

\section{Preliminaries and notations}

We will use the following facts in the proof of the main result of
this work.

\begin{theorem}\cite[p. 53]{Ulam-1964-b}\label{th:2} %
A unimodal map $g$ is topologically conjugated to the tent map if
and only if the complete pre-image of $0$ under the action of $g$
is dense in $[0,\, 1]$.
\end{theorem}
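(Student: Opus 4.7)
The plan is to handle both directions of the equivalence separately, with the backward implication requiring most of the work.

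For the forward direction, suppose $g = h\circ f\circ h^{-1}$ for some homeomorphism $h:[0,1]\to[0,1]$. Since $f$ and $g$ are both unimodal with $f(0)=f(1)=g(0)=g(1)=0$ and a unique interior maximum, the homeomorphism $h$ must map $\{0,1\}$ onto itself; after replacing $h(x)$ by $h(1-x)$ if necessary one may assume $h(0)=0$. Then $g^{-n}(0)=h(f^{-n}(0))$ for every $n$, and since $\bigcup_n f^{-n}(0)$ is the set of dyadic rationals (dense in $[0,1]$), its image under the homeomorphism $h$ is dense as well.

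For the backward direction, let $D=\bigcup_{n\geq 1}g^{-n}(0)$ and $E=\bigcup_{n\geq 1}f^{-n}(0)$. By hypothesis $D$ is dense, and $E$ is clearly dense (dyadic rationals). The first step is a combinatorial induction showing that $g^{-n}(0)$ and $f^{-n}(0)$ have the same cardinality and the same order structure. Indeed, since $g$ is unimodal with $g(v)=1$, every interior value $y\in(0,1)$ of a previously constructed preimage set produces exactly two preimages under $g$ (one in $(0,v)$, one in $(v,1)$), while $0$ and $1$ are their own preimages in $g^{-1}(0)$. Exactly the same phenomenon holds for $f$. This yields a canonical order-preserving bijection $h_n:g^{-n}(0)\to f^{-n}(0)$, compatible with the nested inclusions $g^{-n}(0)\subset g^{-(n+1)}(0)$.

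The second step is to assemble the $h_n$ into an order-preserving bijection $h_0:D\to E$, and to extend it to a map $h:[0,1]\to[0,1]$ by $h(x)=\sup\{h_0(d):d\in D,\,d\leq x\}$. Because $D$ and $E$ are both dense and $h_0$ is monotone, this extension is a continuous strictly increasing bijection, hence a homeomorphism of $[0,1]$. Finally, the conjugacy identity $h\circ g=f\circ h$ holds on the dense set $D$ directly from the construction of the $h_n$, because the position of $g(x)$ among the elements of $g^{-(n-1)}(0)$ coincides with the position of $f(h(x))$ among the elements of $f^{-(n-1)}(0)$; continuity extends the identity to all of $[0,1]$.

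The main obstacle will lie in the combinatorial step: the decreasing branch $g_r$ reverses the order of the preimages it produces, and the proof must verify that this reversal is compatible at every level with the analogous reversal built into the right branch of the tent map, so that the order-preserving identification on preimage sets is consistent with taking one more iterate. Once this bookkeeping is done, density of $D$ and the monotonicity of $h_0$ make the passage from the countable identification to a genuine conjugacy essentially automatic.
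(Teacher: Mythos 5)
The paper does not prove this statement at all: it is quoted from Ulam's book, so there is no internal proof to compare against. Judged on its own, your argument is the standard one and is essentially sound. The forward direction is fine, except for one wrinkle: the remark that one may ``replace $h(x)$ by $h(1-x)$'' to arrange $h(0)=0$ is both unnecessary and not actually a valid fix. It is unnecessary because $h$ must send fixed points of $f$ to fixed points of $g$, and $0$ is the only fixed endpoint of either map (as $f(1)=g(1)=0$), so $h(0)=0$ is forced; and it would not work anyway, since $\sigma(x)=1-x$ satisfies $f\circ\sigma=f$ rather than $\sigma\circ f\circ\sigma=f$, so $h\circ\sigma$ conjugates $f$ to $h\circ(1-f)\circ h^{-1}$, not to $g$. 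For the backward direction, your plan --- order-preserving identifications of $g^{-n}(0)$ with $f^{-n}(0)=\{k/2^{n-1}\}$, consistency across levels, monotone extension using density of both complete preimages, and verification of the conjugacy identity on the dense set --- is exactly the classical construction; the ``bookkeeping'' you defer (that the order-reversal of the decreasing branch of $g$ matches that of the right branch of $f$ at every level, and that the level-$n$ identification restricts correctly to level $n-1$) is genuine but routine, and is precisely the content of what the paper records separately as Remark~\ref{rem:2.1}, Remark~\ref{rem:2.3} and Lemmas~\ref{lema:2.4}--\ref{lema:2.5}. With that verification written out, the proof is complete and correct.
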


Remind that the set $g^{-\infty}(a) = \bigcup\limits_{n\geq
1}g^{-n}(a)$, where $$g^{-n}(a) = \{x\in [0, 1]:\,g ^n(x)=a\},$$
is called the {\bf complete pre-image} of $a$ (with respect to the
action of $g$).

The next function will play a crucial role in our reasonings. For
every $t\in \mathbb{N}$ denote
\begin{equation}\label{eq:2.1}
\xi = \xi_t:\, x \mapsto \displaystyle{\frac{1 - (-1)^{[tx]}}{2}
+(-1)^{[tx]}\{tx\}},\end{equation} where $\{\cdot \}$ denotes the
function of the fractional part of a number and $[\cdot ]$ is the
integer part.

We are now ready to formulate the result, which will be used in
our calculations.

\begin{theorem}\label{th:3}\cite[Theorem~1]{Odesa},
\cite{Plakh-Arx-Odesa} 1. Any self semi conjugacy $\xi$ of the
tent map is $\xi_t$ of the form~\eqref{eq:2.1} for some $t\geq 1$.

\noindent 2. For every $t\in \mathbb{N}$ the $\xi_t$ of the
form~\eqref{eq:2.1} is self semi conjugacy of the tent map.
\end{theorem}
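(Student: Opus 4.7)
The theorem splits into Part 2 (each $\xi_t$ actually solves $\xi\circ f=f\circ\xi$) and Part 1 (every continuous surjective solution of this equation is one of the $\xi_t$). Part 2 is a routine computation; Part 1 is a rigidity statement and carries the real content.

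For Part 2, I would partition $[0,1]$ into the $2t$ intervals on which $\xi_t$ is simultaneously linear and stays in one half $[0,1/2]$ or $[1/2,1]$ (the halves on which $f$ itself is linear). On each such piece both $f\circ\xi_t$ and $\xi_t\circ f$ are affine with explicit slopes $\pm 2t$, and equality reduces to matching one endpoint value, using the explicit formulas $\xi_t(x)=tx-k$ or $k+1-tx$ according as $[tx]=k$ is even or odd.

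For Part 1, let $\xi$ be a continuous surjective solution. The plan is to determine $\xi$ first at the fixed point $0$, then on the dense backward orbit $f^{-\infty}(0)$, and finally everywhere by continuity. Step~(a): from $\xi(0)=\xi(f(0))=f(\xi(0))$ the value $\xi(0)$ is fixed by $f$, hence lies in $\{0,2/3\}$; the case $\xi(0)=2/3$ is ruled out by iterating the local relation $\xi(2x)=2-2\xi(x)$ valid for small $x$ and invoking surjectivity. Step~(b): near $0$ the equation reduces to $\xi(2x)=2\xi(x)$ as long as $\xi(x)\leq 1/2$; I then define $t$ as the reciprocal of the smallest positive $x$ at which $\xi(x)=1$, and show $t\in\mathbb{N}$ with $\xi$ linear of slope $t$ on $[0,1/t]$, thereby matching $\xi_t$ on that initial interval. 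Step~(c): propagate by induction on $n$ that $\xi=\xi_t$ on $f^{-n}(0)$; the equation $f(\xi(x))=\xi(f(x))$ forces $\xi$ at any preimage of $x$ to lie in the two-point set $f^{-1}(\xi(x))$, and continuity together with the already-established monotonicity pattern picks out the correct preimage at each step. Step~(d): conclude $\xi=\xi_t$ on all of $[0,1]$, since by Theorem~\ref{th:2} (applied to $f$ itself) the set $\bigcup_n f^{-n}(0)$ is dense.

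The main obstacle is Step~(b): extracting an integer slope from purely continuous data. A priori the homogeneous equation $\xi(2x)=2\xi(x)$ admits many continuous solutions, and linearity with integer slope emerges only after combining this local relation with the global functional equation and the compatibility condition that forces $\xi$ to bend coherently each time its graph crosses the critical level $1/2$ of $f$. Once this rigidity near $0$ is in hand, Steps~(c)--(d) are a bookkeeping induction, and the correspondence $t\leftrightarrow \xi_t$ becomes automatic from the values $\xi(k/t)\in\{0,1\}$ read off in Step~(b).
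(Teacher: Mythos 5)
First, a point of comparison is impossible here: the paper does not prove this statement at all --- Theorem~\ref{th:3} is imported from \cite[Theorem~1]{Odesa} and \cite{Plakh-Arx-Odesa}, so your proposal has to be judged on its own merits. Your Part~2 is sound and routine; in fact it can be done with no case analysis by verifying the semigroup law $\xi_s\circ\xi_t=\xi_{st}$ and observing that $f=\xi_2$, whence $\xi_t\circ f=\xi_{2t}=f\circ\xi_t$.

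The genuine gap is exactly where you flag it, in Step~(b) --- and flagging it is not closing it. The local relation $\xi(2x)=2\xi(x)$ (valid while $\xi(x)\leq 1/2$) has a large family of continuous, even strictly increasing, solutions, e.g.\ $\xi(x)=x\bigl(1+\varepsilon\sin(2\pi\log_2 x)\bigr)$ for small $\varepsilon>0$, so no amount of iterating it yields linearity, let alone an integer slope; your sentence that linearity ``emerges only after combining this local relation with the global functional equation'' is a restatement of the theorem, not an argument. A concrete mechanism does exist and is conspicuously absent from your plan: once $\xi(0)=0$ is known, $\xi\circ f^n=f^n\circ\xi$ gives $f^n(\xi(x))=\xi(f^n(x))=0$ whenever $f^n(x)=0$, i.e.\ $\xi$ maps the grid $f^{-n}(0)=\{k/2^{n-1}\}$ into itself for every $n$, and the rigidity must be extracted from this grid preservation together with the fold of $f$ at the level $1/2$. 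The same looseness affects Step~(a): with $u=\xi-2/3$ the relation becomes $u(2x)=-2u(x)$, which likewise admits many continuous local solutions, so ``iterating the local relation and invoking surjectivity'' does not by itself exclude $\xi(0)=2/3$. Finally, Step~(c) silently assumes that $\xi$ is piecewise monotone (you invoke an ``already-established monotonicity pattern''), but a continuous surjective solution of the commutation equation is not a priori piecewise monotone, and excluding infinitely many oscillations is part of what must be proved. As it stands, your proposal is an accurate table of contents for a proof whose two load-bearing chapters --- rigidity at $0$ and piecewise monotonicity --- are left blank.
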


Notice that~\eqref{eq:2.1} describes a piecewise linear function
$\xi_t: [0, 1]\rightarrow [0, 1]$, whose tangents are $\pm t$,
which passes through origin, and all whose kinks belong to lines
$y =0$ and $y =1$. The graphs of $\xi_5$ and $\xi_6$ are given at
Fig.~\ref{fig:01}.

\begin{figure}[ht]
\begin{center}
\begin{picture}(140,135)
\put(0,0){\vector(1,0){140}} \put(0,0){\vector(0,1){135}}

\linethickness{0.4mm} \Vidr{0}{0}{24}{120} \VidrTo{48}{0}
\VidrTo{72}{120} \VidrTo{96}{0} \VidrTo{120}{120}
\linethickness{0.1mm}
\end{picture}\hskip 3cm
\begin{picture}(140,135)
\put(0,0){\vector(1,0){140}} \put(0,0){\vector(0,1){135}}

\linethickness{0.4mm} \Vidr{0}{0}{20}{120} \VidrTo{40}{0}
\VidrTo{60}{120} \VidrTo{80}{0} \VidrTo{100}{120} \VidrTo{120}{0}
\linethickness{0.1mm}
\end{picture}
\end{center}
\caption{} \label{fig:01}\end{figure}

\begin{theorem}\cite[Theorem~1]{UMZh-2016}\label{th:4}
Assume that piecewise linear map $g$ is topologically conjugate to
the tent map via the homeomorphism $h$. If $h$ is continuously
differentiable in a subinterval of $[0, 1]$, then it is piecewise
linear.
\end{theorem}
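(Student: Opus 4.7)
The plan is to first transport the $C^1$-regularity of $h$ from the given subinterval $I \subset [0,1]$ to essentially all of $[0,1]$ using the expanding dynamics of the tent map, and then to exploit the resulting functional equation on $h'$ to force $h$ to be piecewise affine.

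First, by shrinking $I$ if necessary, I may assume that $f$ is linear on $I$ with slope $a = \pm 2$, that $g$ is linear on $h(I)$ with some slope $A$, and that $h'$ does not vanish on $I$ (possible by continuity of $h'$ together with injectivity of $h$, which prevents $h'$ from vanishing identically on any subinterval). Differentiating the conjugacy $h \circ f = g \circ h$ on $I$ yields $h'(f(x)) = (A/a)\, h'(x)$. To spread $C^1$-regularity to $[0,1]$, I would invoke density of $f$-preimages (Theorem~\ref{th:2}): for each $y_0 \in [0,1]$ outside a finite postcritical set, there exist an integer $n$ and a local inverse branch $\phi$ of $f^n$ sending an open neighborhood $U$ of $y_0$ into $I$. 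The iterated conjugacy $h \circ f^n = g^n \circ h$ then yields $h|_U = g^n \circ h \circ \phi$, a composition of the piecewise linear maps $g^n$ and $\phi$ with the $C^1$ function $h|_I$. Covering $[0,1]$ in this way, $h$ becomes $C^1$ off a finite set $E \subset [0,1]$.

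On each connected component of $[0,1] \setminus E$, the global relation $h'(f(x))\cdot f'(x) = g'(h(x))\cdot h'(x)$ holds wherever defined. Iterating along a forward orbit gives the product formula $h'(f^n(x)) = h'(x) \prod_{k=0}^{n-1} g'(h(f^k(x)))/f'(f^k(x))$. At periodic points $p$ of period $n$, which are dense since $f$ has dense periodic orbits, this forces the product over one period to equal $1$, because $h'(p) \neq 0$. Since $|f'| \equiv 2$ and $g'$ takes only finitely many values, this combinatorial constraint together with the continuity of $h'$ on each component of $[0,1]\setminus E$ should pin $h'$ down to a locally constant function, proving that $h$ is piecewise linear.

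The main obstacle is this last step: converting the multiplicative ``product-equals-one'' constraints at the dense set of periodic points into a proof that $h'$ is piecewise constant. Continuity of $h'$ alone does not suffice; one must combine it with the observation that the ratio $h'(f(x))/h'(x) = g'(h(x))/f'(x)$ only takes finitely many values, and argue, perhaps by induction on the number of linearity pieces of $g$, that no nontrivial continuous variation of $h'$ on a component of $[0,1]\setminus E$ can simultaneously satisfy all the multiplicative constraints imposed by periodic orbits of arbitrary period.
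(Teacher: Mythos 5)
First, a remark on the comparison itself: the paper does not prove Theorem~\ref{th:4} at all --- it is imported as a black box from \cite[Theorem~1]{UMZh-2016} --- so there is no internal proof to measure yours against; what follows is an assessment of your sketch on its own terms. Your first stage (propagating $C^1$-regularity) is essentially sound but needs one repair: since the integer $n$ in $h|_U=g^n\circ h\circ\phi$ varies with $U$ and the number of kinks of $g^n$ grows with $n$, the union of the exceptional sets is not obviously finite. This is fixable: because the tent map is locally eventually onto, a single $n$ with $f^n(I)=[0,1]$ already supplies inverse branches into $I$ around all but finitely many points, and then the exceptional set $E$ really is finite. Remember also that ``piecewise linear'' requires finitely many pieces, so keeping $E$ finite is not cosmetic.

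The genuine gap is the one you flag yourself, and it is fatal as stated: the periodic-orbit product constraints carry no information about $h$ beyond what you have already assumed. Indeed, on a one-sided neighbourhood of $0$, where $f(x)=2x$ and $g(y)=g'(0)\,y$, the conjugacy reduces to $h(2x)=g'(0)\,h(x)$, whose general continuous solution is $h(x)=x^{\log_2 g'(0)}\,\omega(\log_2 x)$ with $\omega$ an arbitrary $1$-periodic function; choosing $\omega$ smooth, positive and sufficiently close to a constant gives a strictly increasing $C^1$ non-linear $h$ for which the ratio $h'(f(x))/h'(x)=g'(0)/2$ is a single constant and every product over a periodic orbit equals $1$ automatically. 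So ``$h'\neq 0$, the ratio takes finitely many values, and the products at dense periodic points equal $1$'' is satisfied by non-linear $C^1$ solutions of the local functional equation, and no induction on the number of linearity pieces of $g$ can extract piecewise linearity from these hypotheses alone. Whatever closes the argument must exploit a second, incommensurable scaling relation globally --- compare the proof of Proposition~\ref{prop:1} in this paper, where exactly the family $x^{\log_2 g'(0)}\omega(\log_2 x)$ appears and $\omega$ is forced to be constant only by combining $h(2x)=g'(0)h(x)$ with an independent relation $h(tx)=\psi_t'(0)h(x)$ and the irrationality of $\log_2 t$ (and even there the author then invokes Theorem~\ref{th:4} rather than reproving it). Your proposal does not identify such an input, so the conclusion does not follow from the steps given.
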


\begin{remark}\cite[Remark~2.1]{Chaos}\label{rem:2.1}
Recall that the graph of the $n$th iteration $g^n$ of an arbitrary
continuous function $g$ of the form~\eqref{eq:1.1} has the
following properties:

1. The graph consists of $2^n$ monotone curves.

2. Each maximal part of monotonicity of $g^n$ connects the line
$y=0$ and $y=1$.

3. If $x_1,\, x_2$ are such that $\{ g^n(x_1),\, g^n(x_2)\} =
\{0,\, 1\}$ and $g^n$ is monotone on $[x_1,\, x_2]$, then
$g^{n+1}(x_1) = g^{n+1}(x_2) = 0$ and there is $x_3\in (x_1,\,
x_2)$ such that $g^{n+1}(x_3) = 1$. Moreover in this case
$g^{n+1}$ is increasing on $[x_1,\, x_3]$ and is decreasing on
$[x_3,\, x_2]$.
\end{remark}

Since, by Remark~\ref{rem:2.1}, the set $g^{-n}(0)$ consists of
$2^{n-1}+1$ points, the notation follows.

\begin{notation}\cite[Notation~2.1]{Chaos}
For every map $g:\, [0,\, 1]\rightarrow [0,\, 1]$ of the
form~\eqref{eq:1.1} and for every $n\geq 1$ denote $\{
\mu_{n,k}(g),\, 0\leq k\leq 2^{n-1}\}$ such that
$g^n(\mu_{n,k}(g))=0$ and $\mu_{n,k}(g)<\mu_{n,k+1}(g)$ for all
$k$.
\end{notation}

\begin{remark}\cite[Remark~2.2]{Chaos}\label{rem:2.3}
Notice that $\mu_{n,k}(g) = \mu_{n+1,2k}(g)$ for all $k,\, 0\leq k
\leq 2^{n-1}$.
\end{remark}

\begin{lemma}\cite[Lema~1]{Chaos}\label{lema:2.4}
For each map $g$ of the form~\eqref{eq:1.1}, every $n\geq 2$ and
$k,\, 0\leq k\leq 2^{n-2}$, the equalities $$ g(\, \mu_{n,k}(g)\,
)=\mu_{n-1,k}(g)$$ and
$$g(\, \mu_{n,k}(g)\, )= g(\,
\mu_{n,2^{n-1}-k}(g)\, ) $$ hold.
\end{lemma}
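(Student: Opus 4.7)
The plan is to exploit the fact that $g$ restricts to an order-preserving homeomorphism on $[0,v]$ and to an order-reversing homeomorphism on $[v,1]$, each mapping onto $[0,1]$. The central intermediate claim is that $v = \mu_{n,2^{n-2}}(g)$ for every $n \ge 2$; once this is known, the two asserted equalities follow by transporting preimages of $0$ one step through $g$ on each of the two halves.

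To prove $v = \mu_{n,2^{n-2}}(g)$, I first observe that $g^{2}(v) = g(1) = 0$ and hence $g^{n}(v) = 0$ for every $n \ge 2$, so $v \in g^{-n}(0)$. To identify its index I count points on each half. Since $g_l := g|_{[0,v]}$ is an order-preserving homeomorphism onto $[0,1]$, the set $g^{-n}(0)\cap[0,v]$ is in order-preserving bijection, via $g$, with $g^{-(n-1)}(0) = \{\mu_{n-1,j}(g) : 0\le j\le 2^{n-2}\}$, which by Remark~\ref{rem:2.1} has cardinality $2^{n-2}+1$. The same count holds on $[v,1]$, and the two halves meet only at $v$, so the totals match the $2^{n-1}+1$ points of $g^{-n}(0)$. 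Therefore the $2^{n-2}+1$ preimages inside $[0,v]$, listed in order, are $\mu_{n,0}(g),\ldots,\mu_{n,2^{n-2}}(g)$, and the largest of them is $v$.

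For the first equality, fix $0 \le k \le 2^{n-2}$. Then $\mu_{n,k}(g) \in [0,v]$, and the order-preserving bijection $g_l$ carries the list $\mu_{n,0}(g),\ldots,\mu_{n,2^{n-2}}(g)$ in order onto the list $\mu_{n-1,0}(g),\ldots,\mu_{n-1,2^{n-2}}(g)$, yielding $g(\mu_{n,k}(g)) = \mu_{n-1,k}(g)$. For the second equality, I would repeat the argument on $[v,1]$: the order-reversing bijection $g_r := g|_{[v,1]}$ sends $\mu_{n,2^{n-2}}(g)=v,\, \mu_{n,2^{n-2}+1}(g),\ldots,\mu_{n,2^{n-1}}(g)=1$ onto $\mu_{n-1,2^{n-2}}(g)=1,\, \mu_{n-1,2^{n-2}-1}(g),\ldots,\mu_{n-1,0}(g)=0$, respectively. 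In particular $g(\mu_{n,2^{n-1}-k}(g)) = \mu_{n-1,k}(g) = g(\mu_{n,k}(g))$.

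The only step requiring genuine care is the identification $v = \mu_{n,2^{n-2}}(g)$; the rest is just the observation that a strictly monotone bijection sends ordered preimages to ordered preimages, so once the middle index is pinned down everything lines up automatically.
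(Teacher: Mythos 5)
Your argument is correct. Note that the paper itself gives no proof of this lemma --- it is imported by citation from \cite[Lema~1]{Chaos} --- so there is nothing in the text to compare against; your route (pinning down $v=\mu_{n,2^{n-2}}(g)$ by counting preimages on each half, then transporting the ordered lists through the order-preserving bijection $g_l$ and the order-reversing bijection $g_r$) is the natural one and all the index bookkeeping checks out, including the endpoint cases $g(\mu_{n,2^{n-2}})=\mu_{n-1,2^{n-2}}=1$ and $g(\mu_{n,2^{n-1}})=\mu_{n-1,0}=0$. The only implicit assumption you rely on is that $g_l$ and $g_r$ are \emph{strictly} monotone (so that each is a bijection onto $[0,1]$), but the paper already uses this tacitly in Remark~\ref{rem:2.1} when asserting that $g^{-n}(0)$ has exactly $2^{n-1}+1$ points, so nothing is lost.
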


\begin{lemma}\cite[Lemma 3]{Chaos}\label{lema:2.5}
Let $g_1$ and $g_2$ be unimodal maps, and $h:\, [0,\,
1]\rightarrow [0,\, 1]$ be the conjugacy from $g_1$ to $g_2$. Then
$$ h(\mu_{n,k}(g_1)) =\mu_{n,k}(g_2) $$ for all $n\geq
1$ and $k,\, 0\leq k\leq 2^{n-1}$.
\end{lemma}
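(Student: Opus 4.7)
The plan is to convert the conjugacy relation into a transport of complete preimages, and then to read off the identification of the ordered enumerations $\{\mu_{n,k}\}$ as a consequence of the fact that $h$ is an increasing homeomorphism of $[0,1]$.

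The first step is to pin down the endpoint behaviour of $h$. Since $h$ is a homeomorphism of $[0,1]$, one has $\{h(0), h(1)\} = \{0,1\}$. If it were the case that $h(0)=1$, then applying the conjugacy $g_2 \circ h = h \circ g_1$ at $x=0$ would give $g_2(1) = h(g_1(0)) = h(0) = 1$, contradicting $g_2(1)=0$ built into~\eqref{eq:1.1}. Hence $h(0)=0$ and $h(1)=1$, and $h$ is automatically strictly increasing on $[0,1]$.

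Next, the conjugacy iterates to $h \circ g_1^n = g_2^n \circ h$ for every $n \geq 1$, so
\begin{equation*}
h\bigl(g_1^{-n}(0)\bigr) \;=\; g_2^{-n}\bigl(h(0)\bigr) \;=\; g_2^{-n}(0).
\end{equation*}
By Remark~\ref{rem:2.1}, both $g_1^{-n}(0)$ and $g_2^{-n}(0)$ consist of exactly $2^{n-1}+1$ points. Since $h$ is strictly increasing, it carries the ordered listing $\mu_{n,0}(g_1) < \mu_{n,1}(g_1) < \cdots < \mu_{n,2^{n-1}}(g_1)$ of the left-hand set term by term to the ordered listing $\mu_{n,0}(g_2) < \mu_{n,1}(g_2) < \cdots < \mu_{n,2^{n-1}}(g_2)$ of the right-hand set, which is precisely the asserted equality.

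I expect no real obstacle beyond the endpoint identification $h(0)=0$; once that is in hand the remainder is a one-line manipulation combining the conjugacy relation with the observation that an increasing bijection between two finite subsets of $\mathbb{R}$ of equal cardinality is uniquely determined by matching their orderings.
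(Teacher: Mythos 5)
Your argument is correct and complete. Note that the paper itself gives no proof of this lemma --- it is quoted verbatim from \cite{Chaos} as Lemma~3 there --- so there is nothing internal to compare against; your chain (endpoint identification $h(0)=0$ forcing $h$ to be increasing, transport of $g_1^{-n}(0)$ onto $g_2^{-n}(0)$ via the iterated conjugacy, and matching the ordered enumerations of two equinumerous finite sets under an increasing bijection) is exactly the standard argument one would expect the cited source to contain, and every step checks out against the definitions and Remark~\ref{rem:2.1} as stated in this paper.
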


\section{Results}

\subsection{Basic properties of commutative maps}

During this section suppose that $g$ is a unimodal map of the
form~\eqref{eq:1.1} and $\psi:\, [0, 1]\rightarrow [0,1]$ is a
continuous piecewise linear unimodal map such that~\eqref{eq:1.2}
holds.

\begin{lemma}\label{lema:3.1}
$\psi(0) \in\{0;\, x_0\}$, where $x_0$ is the unique positive
fixed point of $g$.
\end{lemma}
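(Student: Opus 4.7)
\medskip

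\noindent\textbf{Proof plan.} The plan is to reduce the lemma to the almost tautological observation that $\psi$ must send fixed points of $g$ to fixed points of $g$, and then to identify the fixed-point set of $g$ explicitly.

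First I would substitute $x=0$ into the commutation relation~\eqref{eq:1.2}. Since $g(0)=0$ by the defining conditions of a unimodal map, this gives
\begin{equation*}
\psi(0)\;=\;\psi(g(0))\;=\;g(\psi(0)),
\end{equation*}
so $\psi(0)$ is a fixed point of $g$ in $[0,1]$. At this point the lemma is reduced to describing the fixed-point set $\mathrm{Fix}(g)$.

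Next I would argue that $\mathrm{Fix}(g)\cap(0,1]$ consists of a single point. On the right-hand branch $[v,1]$ the function $g_r$ is continuous and strictly decreasing from $g_r(v)=1$ down to $g_r(1)=0$, so the continuous function $g_r(x)-x$ takes the value $1-v>0$ at $x=v$ and the value $-1<0$ at $x=1$, and is itself strictly decreasing; hence it has exactly one zero $x_0\in(v,1)$, and this is the unique fixed point on $[v,1]$ other than the endpoints (neither of which is fixed, since $g(1)=0\ne1$). On the left-hand branch $[0,v]$ we have $g_l(0)=0$, and the hypothesis of the lemma tells us that $x_0$ is the \emph{unique} positive fixed point, so no further fixed points appear in $(0,v]$. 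Thus $\mathrm{Fix}(g)=\{0,x_0\}$, and combining this with the first step yields $\psi(0)\in\{0,x_0\}$, as required.

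I do not expect any serious obstacle here: the only nontrivial ingredient is the uniqueness of the positive fixed point, which is built into the statement. The argument uses only the commutativity~\eqref{eq:1.2} evaluated at a single point together with the boundary data $g(0)=0$ from~\eqref{eq:1.1}; the piecewise linearity of $\psi$ plays no role at this stage, but the lemma will presumably be used in conjunction with it in the later case analysis $\psi(0)=0$ versus $\psi(0)=x_0$.
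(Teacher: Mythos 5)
Your proposal is correct and matches the paper's argument: the paper's one-line proof ("plug $x_0$ into~\eqref{eq:1.2}" — evidently a typo for plugging in $0$) is exactly your first step, namely that $\psi(0)=g(\psi(0))$ forces $\psi(0)\in\mathrm{Fix}(g)=\{0,x_0\}$. Your additional discussion of the fixed-point set only makes explicit what the paper leaves implicit.
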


\begin{proof}
Plug $x_0$ into~\eqref{eq:1.2} and the necessary fact follows.
\end{proof}

\begin{lemma}\label{lema:3.2}
If $\psi(0)=x_0$, where $x_0$ is a positive fixed point of $g$,
then $\psi(1)\leq x_0$.
\end{lemma}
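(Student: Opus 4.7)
The plan is to mimic the argument of Lemma~\ref{lema:3.1}, but evaluating the commutativity relation~\eqref{eq:1.2} at the right endpoint instead of the left. First I would plug $x=1$ into $\psi\circ g=g\circ\psi$, using $g(1)=0$ from~\eqref{eq:1.1}, to obtain
\begin{equation*}
x_0=\psi(0)=\psi(g(1))=g(\psi(1)).
\end{equation*}
Thus $\psi(1)$ must lie in the full preimage $g^{-1}(x_0)$, and the lemma is reduced to showing that every element of $g^{-1}(x_0)$ is at most $x_0$.

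Next I would enumerate the preimages of $x_0$ using the decomposition $g=g_l\cup g_r$ from~\eqref{eq:1.1}. Since $g_l$ is continuous and increasing with $g_l(0)=0$ and $g_l(v)=1$, there is a unique $y_1\in[0,v]$ with $g_l(y_1)=x_0$. Since $g_r$ is continuous and decreasing with $g_r(v)=1$ and $g_r(1)=0$, there is a unique $y_2\in[v,1]$ with $g_r(y_2)=x_0$; but the defining property of $x_0$ is $g(x_0)=x_0$, and because $g_r(v)=1>v$ while $g_r(1)=0<1$, the fixed point $x_0$ of $g_r$ lies in $(v,1)$, so $y_2=x_0$. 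Hence $g^{-1}(x_0)=\{y_1,\,x_0\}$.

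Finally I would observe that $y_1\leq v<x_0$ and $y_2=x_0$, so both candidates satisfy the required inequality, giving $\psi(1)\leq x_0$. There is no real obstacle here: the only point requiring a small verification is the strict inequality $v<x_0$, which follows immediately from the fact that $g_r$ is decreasing with $g_r(v)=1>v$. The argument uses nothing beyond the unimodal form~\eqref{eq:1.1} of $g$, the commutation~\eqref{eq:1.2}, and the hypothesis $\psi(0)=x_0$.
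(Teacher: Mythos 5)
Your proposal is correct and follows essentially the same route as the paper, whose proof consists of the single instruction to plug $1$ into~\eqref{eq:1.2}; you simply make explicit the resulting identity $g(\psi(1))=x_0$ and the enumeration of $g^{-1}(x_0)=\{y_1,x_0\}$ with $y_1\leq v<x_0$, details the paper leaves to the reader. The only implicit ingredient is that the positive fixed point lies in $(v,1)$ (so that it coincides with the fixed point of $g_r$), which is guaranteed by the uniqueness assumed in Lemma~\ref{lema:3.1}.
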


\begin{proof}
Plug $1$ into~\eqref{eq:1.2} and the necessary fact follows.
\end{proof}

\begin{lemma}\label{lema:3.3}
$\psi(0) =0$.
\end{lemma}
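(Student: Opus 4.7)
The plan is to argue by contradiction. By Lemma~\ref{lema:3.1}, the only alternative to the claim is $\psi(0) = x_0$, and I aim to show that~\eqref{eq:1.2} then forces $\psi$ to be identically $x_0$ on $[0, 1]$, contradicting the non-constancy of $\psi$.

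First I would analyse the leftmost linear piece of $\psi$. Write it as $\psi(x) = x_0 + ax$ on some $[0, b_1]$ with $b_1 > 0$. Since $x_0 > v$, for small $x$ the value $\psi(x)$ stays above $v$, so $g(\psi(x)) = g_r(x_0 + ax)$; on the other hand $g(x) = g_l(x)$ is small and positive, and for $x$ small enough $\psi(g(x)) = x_0 + a\, g_l(x)$. Substituting into~\eqref{eq:1.2} and comparing signs via the monotonicities of $g_r$ and $g_l$ immediately rules out $a \neq 0$: for $a > 0$ one side of the equation exceeds $x_0$ while the other lies below it, with the inequalities reversed for $a < 0$. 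Hence $\psi \equiv x_0$ on $[0, b_1]$.

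Next, I set $\alpha = \sup\{x \in [0, 1] : \psi \equiv x_0 \text{ on } [0, x]\}$, so that $\alpha \geq b_1 > 0$ and, by continuity, $\psi \equiv x_0$ on $[0, \alpha]$. Equation~\eqref{eq:1.2} then gives $\psi \equiv x_0$ on $g([0, \alpha])$. If $\alpha \geq v$, then $g([0, \alpha]) \supseteq g([0, v]) = [0, 1]$ and $\psi$ is constant on $[0, 1]$, a contradiction. So $\alpha < v$ and $g([0, \alpha]) = [0, g_l(\alpha)]$, whence by maximality $g_l(\alpha) \leq \alpha$; equality would make $\alpha$ a positive fixed point of $g$ in $(0, v)$, contradicting the uniqueness of $x_0$. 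Therefore $g_l(\alpha) < \alpha$.

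The last step, which I expect to be the main obstacle, is to derive a contradiction from $g_l(\alpha) < \alpha$. By continuity of $g_l$ there is $\delta > 0$ with $g_l(x) < \alpha$ for all $x \in (\alpha, \alpha + \delta)$, giving $\psi(g(x)) = x_0$ on that interval. Piecewise linearity is essential here: the maximality of $\alpha$ forces the next linear piece of $\psi$ to be non-constant, so after possibly shrinking $\delta$ we have $\psi(x) \neq x_0$ and $\psi(x) \in (v, 1)$ on $(\alpha, \alpha + \delta)$. But then, since $g_r$ is a bijection of $[v, 1]$ onto $[0, 1]$ fixing $x_0$, the equation $g(\psi(x)) = g_r(\psi(x)) = x_0$ would force $\psi(x) = x_0$, which has just been excluded. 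Thus $g(\psi(x)) \neq \psi(g(x))$, contradicting~\eqref{eq:1.2}.
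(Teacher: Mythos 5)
Your proof is correct, but it takes a genuinely different route from the paper's. The paper also argues by contradiction from Lemma~\ref{lema:3.1}, but it splits into the cases ``$\psi$ increases at $0$'' and ``$\psi$ decreases at $0$'' and then runs a global argument: it propagates the hypothesis $\psi(0)=x_0$ forward to locate a first point $x^*$ with $\psi(x^*)=1$, pulls $x^*$ back through $g_l$ to a point $\widetilde{x}\in(0,x^*)$, and observes that $\psi(g(\widetilde{x}))=1$ while $g(\psi(x))<1$ on $(0,x^*)$, violating~\eqref{eq:1.2}. You instead work purely locally at $0$: writing the first piece as $x_0+ax$, the two sides of $\psi\circ g=g\circ\psi$ are driven through the fixed point $x_0$ by $g_r$ (decreasing) and through $0$ by $g_l$ (increasing), so they move to opposite sides of $x_0$ unless $a=0$; you then dispose of the locally constant case by a maximality/propagation argument. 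Your version has the merit of covering the possibility that $\psi$ is constant on its first piece, which the paper's increase/decrease dichotomy silently omits, and it avoids the surjectivity considerations needed to produce $x^*$. The price is that you lean on $x_0>v$ and on injectivity of $g_r$; both do follow from the paper's setup (the uniqueness of the positive fixed point asserted in Lemma~\ref{lema:3.1} forces $x_0\in(v,1)$, since $g_r(v)=1>v$ and $g_r(1)=0<1$ always give a fixed point there, and ``decreasing'' is meant strictly), but the inequality $x_0>v$ deserves the one-line justification just given rather than a bare assertion.
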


\begin{proof}
By Lemma~\ref{lema:3.1}, if $\psi(0)\neq 0$, then $\psi(0)=x_0$,
where $x_0$ is a fixed point of $g$.

Suppose that $\psi$ increase at $0$, i.e. these is $\varepsilon>0$
such that $\psi$ increase at $(0, \varepsilon)$. If $1\notin
\psi\circ (0, \varepsilon)$, then it follows from~\eqref{eq:1.2}
that $\psi$ increase at $g\circ (0, \varepsilon)$. Plug $1$
into~\eqref{eq:1.2} and obtain that $g\circ \psi(1) =x_0$. Thus,
by Lemma~\ref{lema:3.2}, there exists $x^*\in (0, 1)$ such that
$\psi(x^*) =1$ and $\psi$ increase on $(0, x^*)$. without loss of
generality suppose that $x^*$ is the first zero of $\psi$. Now,
plug $x^*$ into~\eqref{eq:1.2}, whence $\psi(x^{**}) = 0$, where
$x^{**} =g(x^*)$. Thus, there is $\widetilde{x}\in (0, x^*)$ such
that $g_l(\widetilde{x}) = x^*$. This $\widetilde{x}$
transforms~\eqref{eq:1.2} to a wrong equality, because
$\psi(g(\widetilde{x})) =1$, but $g(\psi(x))<1$ for all $x\in (0,
x^*)$ (see Figure~\ref{fig:02}a. ).

We need only to consider the case when $\psi$ decrease at $0$.
This case is analogical (see Figure~\ref{fig:02}b. ). This proves
the lemma.
\end{proof}

\begin{figure}[htbp]
\begin{minipage}[h]{0.45\linewidth}
\begin{center}
\begin{picture}(220,220)
\put(110,110){\vector(-1,0){110}} \put(110,110){\vector(1,0){110}}
\put(110,110){\vector(0,1){110}} \put(110,110){\vector(0,-1){110}}

\linethickness{0.4mm} \qbezier(110,110)(85,60)(60,10)
\qbezier(60,10)(35,60)(10,110) \qbezier(110,110)(160,135)(210,160)
 \qbezier(210,160)(160,185)(110,210)
 \linethickness{0.1mm} \put(0,117){$x$} \put(0,99){$x$}

\put(99,215){$y$} \put(115,215){$x$} \put(215,118){$y$}
\put(215,98){$y$} \put(99,0){$y$} \put(115,0){$x$}
\put(110,110){\vector(0,1){110}}

\qbezier(110,160)(95,185)(80,210) \qbezier(80,210)(80,130)(80,50)
\qbezier(80,50)(95,50)(110,50)

\qbezier(160,110)(185,95)(210,80) \qbezier(210,80)(160,65)(110,50)
\qbezier(210,80)(152.5,80)(95,80) \qbezier(95,80)(95,80)(95,185)
\qbezier(95,185)(115,185)(135,185) \put(140,180){$?$}
\put(113,157){$x_0$} \put(112,82){$x^*$} \put(112,43){$x^{**}$}
\put(97,112){$\widetilde{x}$} \put(68,112){$x^*$}
\put(156,113){$x_0$}
\end{picture}
\end{center} \centerline{a. $\psi$ increase at $0$}\end{minipage}
\begin{minipage}[h]{0.45\linewidth}
\begin{center}
\begin{picture}(220,220)
\put(110,110){\vector(-1,0){110}} \put(110,110){\vector(1,0){110}}
\put(110,110){\vector(0,1){110}} \put(110,110){\vector(0,-1){110}}

\linethickness{0.4mm} \qbezier(110,110)(85,60)(60,10)
\qbezier(60,10)(35,60)(10,110) \qbezier(110,110)(160,135)(210,160)
 \qbezier(210,160)(160,185)(110,210)
 \linethickness{0.1mm} \put(0,117){$x$} \put(0,99){$x$}

\put(99,215){$y$} \put(115,215){$x$} \put(215,118){$y$}
\put(215,98){$y$} \put(99,0){$y$} \put(115,0){$x$}
\put(110,110){\vector(0,1){110}}

\qbezier(110,160)(95,135)(80,110)
\qbezier(160,110)(135,95)(110,80)
\qbezier(110,80)(102.5,80)(95,80)
\qbezier(95,80)(95,107.5)(95,135)
\qbezier(95,135)(110,135)(125,135) \put(127,130){$?$}
\end{picture}
\end{center}
\centerline{b. $\psi$ decrease at $0$}\end{minipage}
\caption{Proof of Lemma~\ref{lema:3.3}} \label{fig:02}
\end{figure}

\begin{lemma}\label{lema:3.4}
$\psi(1) \in\{0; 1\}$.
\end{lemma}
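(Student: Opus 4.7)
The plan is to evaluate the commutation relation~\eqref{eq:1.2} at $x=1$, combine it with Lemma~\ref{lema:3.3}, and then use the fact that the only pre-images of $0$ under a unimodal map of the form~\eqref{eq:1.1} are the endpoints $0$ and $1$.

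More concretely, first I would substitute $x = 1$ into $\psi \circ g = g \circ \psi$, obtaining
\begin{equation*}
\psi(g(1)) = g(\psi(1)).
\end{equation*}
Since $g(1) = 0$ by the defining condition on unimodal maps in~\eqref{eq:1.1}, the left-hand side equals $\psi(0)$, which by Lemma~\ref{lema:3.3} equals $0$. Hence $g(\psi(1)) = 0$.

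Next I would argue that $g^{-1}(0) = \{0,1\}$. This is immediate from~\eqref{eq:1.1}: on $[0,v]$ the map $g_l$ is increasing with $g_l(0)=0$ and $g_l(v)=1$, so $g_l(x) = 0$ forces $x=0$; on $[v,1]$ the map $g_r$ is decreasing with $g_r(v)=1$ and $g_r(1)=0$, so $g_r(x) = 0$ forces $x=1$. Therefore $\psi(1)\in\{0,1\}$, which is exactly the claim of the lemma.

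I do not anticipate any real obstacle here; the statement is an immediate two-line consequence of Lemma~\ref{lema:3.3} together with the elementary observation about the zero set of $g$. The only thing to be careful about is that $\psi$ need not be unimodal (despite the phrasing at the start of the section), so the argument must use only continuity, piecewise linearity, and~\eqref{eq:1.2}, which is indeed all that the sketch above relies on.
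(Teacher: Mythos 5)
Your argument is correct and is exactly the paper's proof: plug $x=1$ into~\eqref{eq:1.2}, use $g(1)=0$ and Lemma~\ref{lema:3.3} to get $g(\psi(1))=0$, and conclude from $g^{-1}(0)=\{0,1\}$. You have merely written out the details that the paper leaves implicit.
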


\begin{proof}
Using Lemma~\ref{lema:3.3}, plug $1$ into~\eqref{eq:1.2}, and the
fact follows.
\end{proof}

Denote $\{\mu_{2,k},\, k\geq 0\}$ the complete set of elements of
$\psi^{-2}(0)$, such that $\mu_{2,k}<\mu_{2,k+1}$ for all
admissible~$k$. By (i) of Lemma~\ref{lema:3.5}, the map $\psi$ is
monotone on $(\mu_{2,k}, \mu_{2,k+1})$ for each $k$. Denote $n$
such that $\psi_{2,n}=1$. The next fact follows from
Lemmas~\ref{lema:3.3} and~\ref{lema:3.4}.

\begin{lemma}\label{lema:3.5}
(i) For every maximal interval $I$ of monotonicity of $\psi$ we
have that $\psi(I) = [0, 1]$.

(ii) For every $k\in \{0,\ldots, n\}$ we have that $g(\mu_{2,k})
=g(\mu_{2,n-k})$ and, moreover, $$\max\{\OneOp_{[0, v]}(\mu_{2,k})
+ \OneOp_{[v, 1]}(\mu_{2,n-k}); \OneOp_{[0, v]}(\mu_{2,n-k}) +
\OneOp_{[v, 1]}(\mu_{2,k})\} =2.$$
\end{lemma}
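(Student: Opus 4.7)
The plan is to exploit that $\psi$ is a piecewise linear unimodal surjection with $\psi(0) = 0$ by Lemma~\ref{lema:3.3} and $\psi(1) \in \{0, 1\}$ by Lemma~\ref{lema:3.4}. Let $x^* \in (0, 1)$ denote the peak of $\psi$; surjectivity forces $\psi(x^*) = 1$. The two maximal monotonicity intervals of $\psi$ are $[0, x^*]$ (increasing) and $[x^*, 1]$ (decreasing), with images $[0, 1]$ and $[\psi(1), 1]$ respectively. For part (i) I would first show $\psi(1) = 0$: the setup $\mu_{2,n} = 1$ requires $1 \in \psi^{-2}(0)$, i.e., $\psi^2(1) = 0$; since $\psi(1) \in \{0, 1\}$ and $\psi(1) = 1$ would give $\psi^2(1) = \psi(1) = 1 \neq 0$, we conclude $\psi(1) = 0$, and then each monotonicity interval has image $[0, 1]$.

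With $\psi(1) = 0$, one computes $\psi^{-1}(0) = \{0, 1\}$ and $\psi^{-1}(1) = \{x^*\}$, giving $\psi^{-2}(0) = \{0, x^*, 1\}$. Therefore $n = 2$, with $\mu_{2,0} = 0$, $\mu_{2,1} = x^*$, and $\mu_{2,2} = 1$. For $k \in \{0, 2\}$ the equality $g(\mu_{2,k}) = g(\mu_{2,n-k})$ reduces to $g(0) = g(1) = 0$, and the indicator-sum is $2$ since $0 \in [0, v]$ and $1 \in [v, 1]$. For $k = 1$ the equation is trivial, and the indicator-sum equals $2$ precisely when $x^* \in [0, v] \cap [v, 1] = \{v\}$.

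The principal step is therefore to prove $x^* = v$. Plugging $x^*$ into~\eqref{eq:1.2} yields $\psi(g(x^*)) = g(\psi(x^*)) = g(1) = 0$, so $g(x^*) \in \psi^{-1}(0) = \{0, 1\}$. Since $x^* \in (0, 1)$ and the unimodal $g$ satisfies $g^{-1}(0) = \{0, 1\}$, we rule out $g(x^*) = 0$; thus $g(x^*) = 1$, which by unimodality of $g$ forces $x^* = v$. This is where the interaction between the commutation relation and the unimodal structure of both $g$ and $\psi$ is essential, and it is the crux of the lemma.
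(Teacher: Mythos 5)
There is a genuine gap: you prove the lemma only in the case where $\psi$ has exactly two maximal intervals of monotonicity, whereas the lemma is stated and used for a commutator with an arbitrary number $n$ of branches. Your reduction to $n=2$ comes from taking the section's standing hypothesis (``$\psi$ is \ldots unimodal'') literally, which forces $\psi^{-2}(0)=\{0,x^*,1\}$; but that reading is incompatible with everything that follows. Lemma~\ref{lema:3.6} analyses the cases $n=4t$, $n=4t+2$ and $n$ odd (and in the odd case explicitly uses $\psi(1)=1$, contradicting your deduction $\psi(1)=0$), and Proposition~\ref{th-5} --- the statement this subsection is building towards --- concerns a $\psi$ with $2t$ monotone branches and produces one with $t$. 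Under your reading all of this is vacuous, so ``unimodal'' in the preamble must be an error and the lemma has to be proved for a general piecewise linear $\psi$ commuting with $g$. In that generality part (ii) is a nontrivial symmetry statement about the $n+1$ branch endpoints $\mu_{2,0}<\dots<\mu_{2,n}$, and your argument, which only shows that the unique peak of a unimodal $\psi$ equals $v$, does not touch it; likewise part (i) for many branches requires showing that no interior local extremum of $\psi$ can take a value in $(0,1)$, which your two-branch surjectivity argument does not supply.

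To be fair, the paper offers no written proof either (only the claim that the lemma follows from Lemmas~\ref{lema:3.3} and~\ref{lema:3.4}), and the correct general argument for (ii) is close in spirit to your computation for $k=1$: once (i) is known, the branch endpoints are exactly $\psi^{-1}(\{0,1\})=\psi^{-1}(g^{-1}(0))=(g\circ\psi)^{-1}(0)=(\psi\circ g)^{-1}(0)=g^{-1}(\psi^{-1}(0))$, and a set of the form $g^{-1}(S)$ is invariant under the order-reversing involution $\sigma$ of $[0,1]$ determined by $g\circ\sigma=g$ and $\sigma([0,v])=[v,1]$. An order-reversing involution of a finite ordered set sends its $k$-th element to its $(n-k)$-th, which is precisely $g(\mu_{2,k})=g(\mu_{2,n-k})$ together with the indicator identity (one of the two points lies in $[0,v]$, the other in $[v,1]$). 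Within the two-branch case your proof, including the crux $x^*=v$, is correct, but as written it does not establish the lemma in the form in which the rest of the paper relies on it.
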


\begin{lemma}\label{lema:3.6}
If $n=4t$ for some $t\in \mathbb{N}$, then $\psi(v)=0$.

If $n=4t+2$ for some $t\in \mathbb{N}$, then $\psi(v)=0$.

If $n$ is odd, then $\psi(v)=v$.
\end{lemma}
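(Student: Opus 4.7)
The main move is to evaluate~\eqref{eq:1.2} at $x=v$. Since $g(v)=1$, this yields
$$
\psi(1)\;=\;g(\psi(v)),
$$
and by Lemma~\ref{lema:3.4} we have $\psi(1)\in\{0,1\}$. Splitting on this value: if $\psi(1)=1$ then $\psi(v)$ is a preimage of $1$ under the unimodal $g$, so $\psi(v)=v$; if $\psi(1)=0$ then $\psi(v)\in g^{-1}(0)=\{0,1\}$. The proof thus reduces to (i) matching $\psi(1)$ with the parity of $n$, and (ii) deciding between $0$ and $1$ in the even case.

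For (i), I would use the fact that, by Lemma~\ref{lema:3.5}(i) applied to each of the $n$ maximal monotone arcs of $\psi$, the endpoint-values of $\psi$ alternate between $0$ and $1$ across consecutive arcs; combined with $\psi(0)=0$ from Lemma~\ref{lema:3.3}, this forces $\psi(1)=0$ iff $n$ is even and $\psi(1)=1$ iff $n$ is odd. This settles the odd case immediately as $\psi(v)=v$.

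For (ii), I would locate $v$ among the $\{\mu_{2,k}\}$. Lemma~\ref{lema:3.5}(ii) gives the symmetry $g(\mu_{2,k})=g(\mu_{2,n-k})$ with $\mu_{2,k}$ and $\mu_{2,n-k}$ lying on opposite sides of $v$; the involution $k\mapsto n-k$ has the unique fixed point $k=n/2$ in the even case, forcing $\mu_{2,n/2}=v$. Hence $\psi(v)=\psi(\mu_{2,n/2})$ is the middle term of the finite sequence $a_k=\psi(\mu_{2,k})\in\psi^{-1}(0)$. The monotonicity of $\psi$ on each $(\mu_{2,k},\mu_{2,k+1})$ forces consecutive $a_k,a_{k+1}$ to be neighbours in the ordered set $\psi^{-1}(0)$, since otherwise a skipped zero of $\psi$ would manufacture an extra element of $\psi^{-2}(0)$ strictly between $\mu_{2,k}$ and $\mu_{2,k+1}$, contradicting consecutiveness. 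Transporting the symmetry of Lemma~\ref{lema:3.5}(ii) through~\eqref{eq:1.2} then endows $(a_k)$ with a palindromic ``triangle-wave'' shape, and the residue $n\bmod 4$ selects the midpoint value among the two extremes of this walk, distinguishing the $n=4t$ and $n=4t+2$ cases.

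The principal obstacle I foresee is proving the palindromic triangle-wave structure of $(a_k)$ rigorously. A priori the walk could oscillate irregularly, so I would combine the adjacency constraint (from consecutiveness of the $\mu_{2,k}$'s) with the symmetry of Lemma~\ref{lema:3.5}(ii) to rule out non-triangular walks. Once this structural fact is in hand, reading off $a_{n/2}$ according to the parity of $n/2$ yields the three conclusions of the lemma.
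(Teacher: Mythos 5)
Your plan follows essentially the same route as the paper: evaluate~\eqref{eq:1.2} at $v$ to get $\psi(1)=g(\psi(v))$, tie $\psi(1)$ to the parity of $n$ via the alternation of endpoint values forced by Lemma~\ref{lema:3.5}(i) and $\psi(0)=0$, conclude $\psi(v)=v$ from $g(\psi(v))=1$ in the odd case, and in the even case use the symmetry of Lemma~\ref{lema:3.5}(ii) to identify $v$ with the fixed point $\mu_{2,n/2}$ of the involution $k\mapsto n-k$. The one place you diverge is the final step of the even case: the paper treats the $\mu_{2,k}$ as the lap endpoints of $\psi$ (despite the text introducing them as $\psi^{-2}(0)$ --- under the literal reading the set-up breaks, e.g.\ $1\notin\psi^{-2}(0)$ when $\psi(1)=1$, and $\psi$ fails to be monotone between consecutive elements), so $\psi(\mu_{2,k})$ simply alternates $0,1,0,1,\dots$ and $\psi(\mu_{2,n/2})$ is read off from the parity of $n/2$ with no further work. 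Your palindromic triangle-wave analysis of the walk $a_k=\psi(\mu_{2,k})$ in $\psi^{-1}(0)$ is a correct way to handle the literal $\psi^{-2}(0)$ reading (the adjacency argument and the monotone traversal of $\psi^{-1}(0)$ within each lap of $\psi$ do force the triangle wave, and one checks that the two residues of $n$ mod $4$ agree with those of the lap count), but it is unnecessary machinery once the $\mu_{2,k}$ are understood as lap endpoints. Finally, note that both your plan and the paper's own proof yield $\psi(v)=1$, not $0$, when $n\equiv 2\pmod 4$; the second clause of the lemma as printed is evidently a misprint, and your expectation that the residue ``distinguishes'' the two even cases is the correct reading.
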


\begin{proof} If $n=2t$ is even, then, by (ii) of Lemma~\ref{lema:3.5},
$\mu_{2,t}=v$. Moreover, if $t=2s$ is even, than
$\psi(\mu_{2,t})=0$ and, otherwise $\psi(\mu_{2,t})=1$.

If $n=2t+1$, then, by (ii) of Lemma~\ref{lema:3.5}, $g(\mu_{2,k})
=g(\mu_{2,n-k+1})$ for $k = \frac{n+1}{2}=t+1$. Thus, $v\in
(\mu_{2,t}, \mu_{2,t+1})$. Now plus $v$ into~\eqref{eq:1.2} and
obtain $\psi(g(v)) = g(\psi(v))$. Since $\psi(1)=1$ for odd $n$,
then $1 =g(\psi(v))$, whence $\psi(v)=v$.
\end{proof}

Denote $\psi_k$ the restriction of $\psi$ on $[\mu_{2,k},
\mu_{2,k+1}]$. Denote $\psi_{k,0}$ the restriction of $\psi_k$ on
$(\mu_{2,k}, \psi_k^{-1}(v))$ and $\psi_{k,1}$ the restriction of
$\psi_k$ on $(\psi_k^{-1}(v), \mu_{2,k+1})$.

\begin{lemma}\label{lema:3.7}
(i) for every $k\leq t-1$ we have that $$ g(I_{k,s}) = I_{2k+s},\,
s\in \{0; 1\}.
$$

(ii) for every $k,\, t\leq k\leq 2t-1$ we have $$ g(I_{k,s}) =
I_{2t-1-2(k-t)-s} =I_{4t-1-2k-s},\, s\in \{0; 1\}.
$$
\end{lemma}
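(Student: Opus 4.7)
The plan is to exploit the commutation $g\circ\psi=\psi\circ g$ to track how $g$ moves the points of $\psi^{-2}(0)=\{\mu_{2,j}\}$ and the midpoints $\psi_k^{-1}(v)$, and then to use monotonicity of $g$ on $[0,v]$ and on $[v,1]$ to read off the images of each $I_{k,s}$ explicitly. The statement is about the even case $n=2t$, for which $v=\mu_{2,t}$ by Lemma~\ref{lema:3.6}.

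First I would show that $g$ preserves the set $\{\mu_{2,j}\}$: from $\psi^2\circ g=g\circ\psi^2$ one gets $\psi^2(g(\mu_{2,k}))=g(0)=0$. An analogous argument places each $g(\psi_k^{-1}(v))$ in this set as well, using $\psi(g(\psi_k^{-1}(v)))=g(v)=1$ and $\psi(1)=0$ (which holds since $n$ is even). Finer information comes from one application of $\psi$: $\psi(g(\mu_{2,k}))=g(\psi(\mu_{2,k}))\in g(\{0,1\})=\{0\}$, so $g(\mu_{2,k})$ is an even-indexed $\mu_{2,2m}$, while $\psi(g(\psi_k^{-1}(v)))=1$ makes $g(\psi_k^{-1}(v))$ an odd-indexed $\mu_{2,2m+1}$.

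Second, since $v=\mu_{2,t}$ one has $I_k\subset[0,v]$ for $k\le t-1$ and $I_k\subset[v,1]$ for $k\ge t$, so $g|_{I_k}$ is monotone on the whole interval, hence so is $g|_{I_{k,s}}$. The central step is to show that $g(I_{k,s})$ is exactly a single $I_j$. Rewriting the commutation on $I_{k,s}$: the map $g\circ\psi|_{I_{k,s}}$ is $g$ restricted to a monotone image $\psi(I_{k,s})\in\{[0,v],[v,1]\}$, hence is monotone onto $[0,1]$. Since $g|_{I_{k,s}}$ is a monotone bijection onto $g(I_{k,s})$, the equal map $\psi\circ g|_{I_{k,s}}$ forces $\psi|_{g(I_{k,s})}$ to be monotone onto $[0,1]$. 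But $\psi$ reverses direction at every $\mu_{2,j}\in(0,1)$, so $g(I_{k,s})$ cannot contain such a point in its interior; together with $\psi(g(I_{k,s}))=[0,1]$ this pins $g(I_{k,s})$ to be a full $I_j$.

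Finally, $j$ is identified by induction on $k$, anchored at $g(\mu_{2,0})=0=\mu_{2,0}$ and $g(\mu_{2,t})=g(v)=1=\mu_{2,2t}$. For (i), assuming $g(\mu_{2,k})=\mu_{2,2k}$ with $k\le t-1$: since $g$ is increasing on $I_k$, the left endpoint of $g(I_{k,0})$ equals $\mu_{2,2k}$, and combined with the previous step this forces $g(I_{k,0})=I_{2k}$, so $g(\psi_k^{-1}(v))=\mu_{2,2k+1}$; the same reasoning gives $g(I_{k,1})=I_{2k+1}$ and $g(\mu_{2,k+1})=\mu_{2,2(k+1)}$, closing the induction. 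Case (ii) is symmetric, starting from $g(\mu_{2,t})=\mu_{2,2t}$ and using that $g$ is decreasing on $[v,1]$, which flips the role of left and right endpoints and yields the formula $g(I_{k,s})=I_{4t-1-2k-s}$. The main obstacle is the third step, rigorously arguing that $g(I_{k,s})$ is a single $I_j$ rather than a strict sub- or superinterval; once this is secured the remaining work is pure index bookkeeping.
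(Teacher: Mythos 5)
The paper states Lemma~\ref{lema:3.7} with no proof at all (the neighbouring Lemma~\ref{lema:3.8} is likewise dismissed as ``obvious''), so there is nothing of the author's to compare your argument against; what you have written supplies the missing justification, and it is correct. Your central step is the right one: since $\psi$ maps $I_{k,s}$ monotonically onto $[0,v]$ or $[v,1]$, the commutation gives that $\psi\circ g|_{I_{k,s}}=g\circ\psi|_{I_{k,s}}$ is monotone onto $[0,1]$, and composing with $(g|_{I_{k,s}})^{-1}$ (legitimate because $I_k$ lies entirely in $[0,v]$ or entirely in $[v,1]$, where $g$ is strictly monotone) shows that $\psi$ is monotone onto $[0,1]$ on $g(I_{k,s})$, which forces $g(I_{k,s})$ to be exactly one lap $I_j$ of $\psi$. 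The endpoint induction anchored at $g(0)=0=\mu_{2,0}$ and $g(v)=1=\mu_{2,2t}$ then identifies $j=2k+s$ for $k\leq t-1$ and $j=4t-1-2k-s$ for $t\leq k\leq 2t-1$, and these formulas check out on concrete instances such as $g=f$ with $\psi=\xi_2$ or $\xi_4$. Two conventions you lean on deserve to be stated explicitly, because the paper's notation is ambiguous: you read the lemma with $n=2t$ and $v=\mu_{2,t}$, which is the only reading under which the index ranges in (i) and (ii) together exhaust $I_0,\dots,I_{2t-1}$ and which agrees with the even case of Lemma~\ref{lema:3.6}; and you use that $\psi^{-1}(0)$ and $\psi^{-1}(1)$ consist exactly of the even- and odd-indexed $\mu_{2,j}$ respectively, which is the content of Lemma~\ref{lema:3.8} --- that lemma concerns $\psi$ alone and does not depend on the present one, so invoking it here creates no circularity.
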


\begin{lemma}\label{lema:3.8}
For any $k,\, 0\leq k\leq 2t-1$ the map $\psi_{2k,s}$ increase,
and the map $\psi_{2k+1,s}$ decrease for $s\in \{0; 1\}$.
Moreover,
\begin{equation}\label{eq:3.1} \psi(I_{2k,0}) = \psi(I_{2k+1,1})
=(0, v),\end{equation} and
\begin{equation}\label{eq:3.2} \psi(I_{2k,1}) = \psi(I_{2k+1,0})
=(v, 1).\end{equation}
\end{lemma}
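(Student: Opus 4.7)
The plan is to first determine the direction of monotonicity of each full piece $\psi_k$ by a short induction along $k$, and then to read off the images of the halves $I_{k,0}$ and $I_{k,1}$ directly from the definition of $\psi_k^{-1}(v)$ and the monotonicity just established. I would anchor the induction with Lemma~\ref{lema:3.3}, which gives $\psi(\mu_{2,0})=\psi(0)=0$. Combined with Lemma~\ref{lema:3.5}(i), which says that $\psi$ is monotone on $I_0$ with $\psi(I_0)=[0,1]$, this forces $\psi_0$ to be increasing, so that $\psi(\mu_{2,1})=1$. Continuity of $\psi$ together with Lemma~\ref{lema:3.5}(i) applied to $I_1$ then forces $\psi_1$ to decrease from $1$ down to $0$, and hence $\psi(\mu_{2,2})=0$. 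Proceeding by induction on $k$, the boundary values $\psi(\mu_{2,k})$ alternate between $0$ and $1$, so that $\psi_{2k}$ is increasing and $\psi_{2k+1}$ is decreasing for every admissible $k$; this is the first claim of the lemma.

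For the second claim I would fix $k$ and use the monotonicity just established. Since $\psi_{2k}$ increases monotonically from $0$ to $1$, it attains the value $v$ at a single point $\psi_{2k}^{-1}(v)$, and by continuity and monotonicity $\psi(I_{2k,0})=(0,v)$ and $\psi(I_{2k,1})=(v,1)$. Symmetrically, $\psi_{2k+1}$ decreases from $1$ to $0$ and passes through $v$ exactly once, so $\psi(I_{2k+1,0})=(v,1)$ and $\psi(I_{2k+1,1})=(0,v)$. Collecting the four identities gives exactly~\eqref{eq:3.1} and~\eqref{eq:3.2}.

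I do not expect a serious obstacle here. The only points that require care are the anchoring of the induction, where the alternative $\psi(0)=x_0$ of Lemma~\ref{lema:3.1} has to be excluded (done by Lemma~\ref{lema:3.3}), and the fact that $\psi$ is indeed monotone on each entire $I_k$ with $\psi(I_k)=[0,1]$, which is the content of Lemma~\ref{lema:3.5}(i). Once these two inputs are in place the lemma reduces to a direct reading of the construction of $I_{k,0}$ and $I_{k,1}$ and of the direction in which $\psi$ traverses $[0,1]$ on each maximal monotonicity interval.
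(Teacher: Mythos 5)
Your argument is correct: anchoring at $\psi(0)=0$ via Lemma~\ref{lema:3.3}, propagating the alternating boundary values $0,1,0,1,\dots$ along the $\mu_{2,k}$ by monotonicity and surjectivity of each branch (Lemma~\ref{lema:3.5}(i)), and then reading off the images of the halves from the single crossing of $v$ is exactly the intended reasoning. The paper itself dismisses this lemma with ``Lemma is obvious,'' so your write-up simply supplies the details of the same (and essentially only) approach.
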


\begin{proof}
Lemma is obvious.
\end{proof}

\begin{lemma}\label{lema:3.9}
(i) Suppose that $k$ is such that $I_{2k}\subset (0, v)$. Then the
equality~\eqref{eq:1.2} for $x\in I_{2k+p,s}$ holds if an only if
\begin{equation}\label{eq:3.3}
g_{s +p +1 -2ps}\circ \psi_{2k+p,s} = \psi_{4k+2p+s}\circ g_1,\,
p,s\in \{0; 1\}.
\end{equation}

(ii) Suppose that $k$ is such that $I_{2k}\subset (v, 1)$. Then
the equality~\eqref{eq:1.2} for $x\in I_{2k+p,s}$ holds if an only
if
\begin{equation}\label{eq:3.4}
g_{s +p +1 -2ps}\circ \psi_{2k+p,s} = \psi_{4t-1-4k-2p-s}\circ
g_2,\, p,s\in \{0; 1\}.
\end{equation}
\end{lemma}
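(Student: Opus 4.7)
The plan is to unfold the commutation equation \eqref{eq:1.2} pointwise on each piece $I_{2k+p,s}$, reading off from Lemmas~\ref{lema:3.5}, \ref{lema:3.7} and \ref{lema:3.8} which affine branch of $g$ acts on either side. The content of Lemma~\ref{lema:3.9} is essentially combinatorial bookkeeping: once the correct branches are identified, both \eqref{eq:3.3} and \eqref{eq:3.4} become tautological restatements of \eqref{eq:1.2} on the corresponding small interval.

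For part (i), I first note that the assumption $I_{2k}\subset(0,v)$ forces $I_{2k+1}\subset(0,v)$ as well, since $v=\mu_{2,2t}$ by (ii) of Lemma~\ref{lema:3.5} and the intervals $I_j=(\mu_{2,j},\mu_{2,j+1})$ sit consecutively. Hence for every $x\in I_{2k+p,s}$ with $p\in\{0;1\}$ we have $g(x)=g_1(x)$, and (i) of Lemma~\ref{lema:3.7}, applied with index $2k+p\leq 2t-1$, yields $g(I_{2k+p,s})=I_{4k+2p+s}$. Consequently the right-hand side of \eqref{eq:1.2} at $x$ equals $\psi_{4k+2p+s}(g_1(x))$, which is exactly the right-hand side of \eqref{eq:3.3}. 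For the left-hand side, Lemma~\ref{lema:3.8} tells us that $\psi_{2k+p,s}(x)$ lies in $(0,v)$ precisely when $p=s$ and in $(v,1)$ precisely when $p\neq s$; since the integer $s+p+1-2ps$ evaluates to $1$ in the former case and to $2$ in the latter, it names in all four $(p,s)$-cases exactly the branch of $g$ that acts on $\psi(x)$. Matching the two sides turns \eqref{eq:1.2} on $I_{2k+p,s}$ into \eqref{eq:3.3}, and conversely.

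Part (ii) is entirely symmetric: the hypothesis $I_{2k}\subset(v,1)$ places both $I_{2k}$ and $I_{2k+1}$ above $v$, so $g(x)=g_2(x)$; and (ii) of Lemma~\ref{lema:3.7} then gives $g(I_{2k+p,s})=I_{4t-1-2(2k+p)-s}=I_{4t-1-4k-2p-s}$, which produces the right-hand side of \eqref{eq:3.4}. The identification of the branch of $g$ acting on $\psi(x)$ depends only on $\psi(x)$ itself and is therefore unchanged, so the same expression $s+p+1-2ps$ appears on the left. The only genuine obstacle throughout is to keep the two layers of indexing (the $k$-index of the $I$-intervals and the $s$-index within each interval) consistent under the doubling induced by $g$; once this bookkeeping is in place, no further argument is required.
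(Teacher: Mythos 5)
Your proof is correct and follows essentially the same route as the paper: the paper likewise uses Lemma~\ref{lema:3.8} (via \eqref{eq:3.1}--\eqref{eq:3.2}) to show $g\circ\psi_{2k+p,s}=g_{1+p+s-2ps}\circ\psi_{2k+p,s}$ and then invokes Lemma~\ref{lema:3.7} to identify the piece of $\psi$ acting on $g(x)$, exactly as you do. Your explicit four-case check of the exponent $s+p+1-2ps$ and the remark that the pair $I_{2k},I_{2k+1}$ lies on one side of $v$ are just a more direct write-up of the paper's ``generalize'' steps \eqref{eq:3.5}--\eqref{eq:3.11}.
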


\begin{proof}
By~\eqref{eq:3.1}, write
\begin{eqnarray}\label{eq:3.5} g\circ \psi_{2k,0} = g_1\circ \psi_{2k,0},\\
\label{eq:3.6} g\circ \psi_{2k+1,1} = g_1\circ \psi_{2k+1,1}.
\end{eqnarray} By~\eqref{eq:3.2} write
\begin{eqnarray}\label{eq:3.7}
g\circ \psi_{2k,1} = g_2\circ \psi_{2k,0},\\ %
\label{eq:3.8} g\circ \psi_{2k+1,0} = g_2\circ \psi_{2k+1,1}.
\end{eqnarray}
Generalize~\eqref{eq:3.5} and~\eqref{eq:3.7},
\begin{equation}\label{eq:3.9}
g\circ \psi_{2k,s} = g_{s+1}\circ \psi_{2k,s},\, s\in \{0; 1\}.
\end{equation} Generalize~\eqref{eq:3.6} and~\eqref{eq:3.8},
\begin{equation}\label{eq:3.10}
g\circ \psi_{2k+1,s} = g_{2-s}\circ \psi_{2k+1,s},\, s\in \{0;
1\}.
\end{equation} At last, generalize~\eqref{eq:3.9} and~\eqref{eq:3.10}
as $$ g\circ \psi_{2k+p,s} = g_{p(2-s) +(1-p)(1+s)}\circ
\psi_{2k+1,s},\, p,s\in \{0; 1\}, $$ or
\begin{equation}\label{eq:3.11} g\circ \psi_{2k+p,s} = g_{1+p+s -2ps}\circ \psi_{2k+1,s},\, p,s\in \{0; 1\},
\end{equation}
Now part~(i) of Lemma follows from~(i) of Lemma~\ref{lema:3.7},
and~\eqref{eq:3.11}.

Part~(ii) of Lemma follows from (i) and from part (ii) of
Lemma~\ref{lema:3.7}.
\end{proof}

We are now ready to prove Theorem~\ref{th-5}.

\begin{proof}[Proof of Theorem~\ref{th-5}]
For any $k,\, 0\leq k<t$ let $\widetilde{\psi}_{4k+p}$, for $p\in
\{0; 1; 2; 3\}$, be defined on $I_{4k+p}$ as
\begin{equation}\label{eq:3.12}
\begin{array}{ll}
\widetilde{\psi}_{4k} = g_1^{-1}\circ \psi_{4k},\\
\widetilde{\psi}_{4k+1} = g_2^{-1}\circ \psi_{4k+1},\\
\widetilde{\psi}_{4k+2} = g_2^{-1}\circ \psi_{4k+2},\\
\widetilde{\psi}_{4k+3} = g_1^{-1}\circ \psi_{4k+3}.
\end{array}
\end{equation}
It follows from Lemma~\ref{lema:3.8} that $\widetilde{\psi}_{4k}$
increase $I_{4k}\rightarrow (0, v)$, the map
$\widetilde{\psi}_{4k+1}$ increase $I_{4k+1}\rightarrow (1, v)$,
the map $\widetilde{\psi}_{4k+2}$ decrease $I_{4k+2}\rightarrow
(1, v)$ and, finally, the map $\widetilde{\psi}_{4k+3}$ decrease
$I_{4k+3}\rightarrow (0, v)$.

Define $\widetilde{\psi}:\, \bigcup\limits_{k}I_k\rightarrow [0,
1]$ as $\widetilde{\psi} = \widetilde{\psi}_{k}$ on $I_k$ for each
$k,\, 0\leq k\leq 2t-1$. By Lemma~\ref{lema:3.8}, the map
$\widetilde{\psi}$ can be continuously extended to the entire $[0,
1]$, whence denote by the same letter its continuation.

It follows from the construction that
\begin{equation}\label{eq:3.13} g\circ \widetilde{\psi} = \psi.
\end{equation}
Notice, that equations~\eqref{eq:3.12} can be written as
\begin{equation}\label{eq:3.14}
\widetilde{\psi}_{4k+2p+s} = g_{s+p+1-2ps}^{-1}\circ
\psi_{4k+2p+s},\ s,p\in \{0; 1\}.
\end{equation}

Thus, for any $k,\, 0\leq k\leq 2t-1$ and $s\in \{0; 1\}$ write $$
\left.\widetilde{\psi}\circ g\right|_{I_{2k+p,s}}
\stackrel{\text{by~(i) of Lema~\ref{lema:3.7}}}{=}
\left.\widetilde{\psi}_{4k+2p+s}\circ g_1 \right|_{I_{2k+p,s}}
\stackrel{\text{by~\eqref{eq:3.14}}}{=}$$
\begin{equation}\label{eq:3.15}= g_{s+p+1-2ps}^{-1}\circ
\psi_{4k+2p+s}\circ g_1.
\end{equation}

By~\eqref{eq:3.13} and~\eqref{eq:3.15}, the
equality~\eqref{eq:1.3} is equivalent to $$ \psi =
g_{s+p+1-2ps}^{-1}\circ \psi_{4k+2p+s}\circ g_1
$$ for all $k,\, 0\leq k\leq t-1$ and $s\in \{0; 1\}$. The last
follows from (i) of Lemma~\ref{lema:3.9}.

For any $k,\, t\leq k\leq 2t-1$ and $s\in \{0; 1\}$ write
\begin{equation}\label{eq:3.16}
\left.\widetilde{\psi}\circ g\right|_{I_{2k+p,s}}
\stackrel{\text{by~(ii) of Lema~\ref{lema:3.7}}}{=}
\left.\widetilde{\psi}_{4t-1-4k-2p-s}\circ g_2
\right|_{I_{2k+p,s}}.\end{equation}

If $s=0$, then write $$ \widetilde{\psi}_{4t-1-4k-2p-s} =
\widetilde{\psi}_{4t-4k+2(1-p)-4+1}\stackrel{\text{by~\eqref{eq:3.14}}}{=}$$
$$=g^{-1}_{1+(1-p)+1-2(1-p)}\circ \psi_{4t-1-4k-2p-s} =
g^{-1}_{p+1}\circ
\psi_{4t-1-4k-2p-s}=$$\begin{equation}\label{eq:3.17}
=g^{-1}_{1+s+p-2ps}\circ \psi_{4t-1-4k-2p-s}.
\end{equation}

If $s=1$, then write $$ \widetilde{\psi}_{4t-1-4k-2p-s} =
\widetilde{\psi}_{4t-4k+2(1-p)-4}\stackrel{\text{by~\eqref{eq:3.14}}}{=}$$
$$=g^{-1}_{1+(1-p)}\circ \psi_{4t-4k-2p-s} =
g^{-1}_{2-p}\circ
\psi_{4t-4k-2p-s}=$$\begin{equation}\label{eq:3.18} =
g^{-1}_{1+s+p-2ps}\circ \psi_{4t-1-4k-2p-s}.
\end{equation}
Thus, by~\eqref{eq:3.17} and~\eqref{eq:3.18},
\begin{equation}\label{eq:3.19}
\widetilde{\psi}_{4t-1-4k-2p-s} = g^{-1}_{1+s+p-2ps}\circ
\psi_{4t-1-4k-2p-s}. g^{-1}_{1+s+p-2ps}\circ \psi_{4t-1-4k-2p-s}
\end{equation}
Now, by~\eqref{eq:3.16} and~\eqref{eq:3.19},
\begin{equation}\label{eq:3.20} \left.\widetilde{\psi}\circ
g\right|_{I_{2k+p,s}} \stackrel{\text{by~(ii) of
Lema~\ref{lema:3.7}}}{=} \left.g^{-1}_{1+s+p-2ps}\circ
\psi_{4t-1-4k-2p-s}\circ g_2 \right|_{I_{2k+p,s}}.
\end{equation}
By~\eqref{eq:3.13} and~\eqref{eq:3.20}, the
equality~\eqref{eq:1.3} is equivalent to $$ \psi
=g^{-1}_{1+s+p-2ps}\circ \psi_{4t-1-4k-2p-s}\circ g_2
$$ for all $k,\, t\leq k\leq 2t-1$ and $s\in \{0; 1\}$. the last
follows from (ii) of Lemma~\ref{lema:3.9}.
\end{proof}

\subsection{The case when unimodal map is conjugated with the tent map}

Let piecewise linear unimodal map $g$, such that
$\overline{g^{-\infty}(0)} = [0, 1]$, will be fixed till the end
of this section. By Theorem~\ref{th:2}, let $h:\, [0,
1]\rightarrow [0, 1]$ be the conjugacy from $f$ to $g$.

\begin{lemma}\label{lema:3.10}
Let $a\in (0, 1)$ be the first positive kink of $g$. If $\psi\,
'(0)>g'(0)$ for a piecewise linear self-semiconjugation $\psi$ of
$g$, then $\frac{a\cdot g'(0)}{\psi\, '(0)}$ is the first positive
kink of $\psi$.
\end{lemma}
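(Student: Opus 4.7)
The plan is to exploit the fact that both $\psi$ and $g$ are linear on initial intervals near $0$, so that the two sides of the functional equation $\psi\circ g = g\circ\psi$ are piecewise linear functions whose first positive kink admits an explicit expression on each side; equating these expressions will yield $b = ag'(0)/\psi'(0)$, where $b$ denotes the first positive kink of $\psi$.

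Concretely, $\psi(0)=0$ (by the analogue of Lemma~\ref{lema:3.3} applied to the self-semi-conjugacy $\psi$), so $\psi(x)=\psi'(0)\,x$ on $[0,b]$ and $g(x)=g'(0)\,x$ on $[0,a]$. Both $\psi\circ g$ and $g\circ\psi$ therefore start out linear with the common slope $\psi'(0)g'(0)$. As $x$ grows, $\psi\circ g$ can first develop a kink either where $g$ itself kinks (at $x=a$) or where $g(x)$ reaches the first kink $b$ of $\psi$ (at $x=b/g'(0)$); so its first positive kink is $\min\{a,\,b/g'(0)\}$. Symmetrically, $g\circ\psi$ first kinks at $\min\{b,\,a/\psi'(0)\}$. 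Since the two compositions agree as functions on $[0,1]$, these two minima must coincide.

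The heart of the argument is then to rule out the ``outer'' candidates $a$ and $b$ in favour of the ``inner'' ones $b/g'(0)$ and $a/\psi'(0)$. Under the hypothesis $\psi'(0)>g'(0)$, together with $g'(0)>1$ (the latter forced by the conjugacy of $g$ with the tent map via Theorem~\ref{th:2}: $g'(0)\leq 1$ would make $0$ either non-repelling with an open attracting basin, or would produce an entire interval of fixed points on $[0,a]$, both incompatible with the dynamics of $f$), a short case check shows that assuming the first kink on one side equals $a$ or $b$ forces either $g'(0)=1$ or $\psi'(0)\leq 1$, a contradiction. Hence the first kinks are $b/g'(0)$ and $a/\psi'(0)$ respectively, and equating them gives $b=ag'(0)/\psi'(0)$.

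The main obstacle I anticipate is extracting $g'(0)>1$ cleanly from the standing hypothesis $\overline{g^{-\infty}(0)}=[0,1]$ and the earlier lemmas, since the paper does not seem to isolate this slope bound as a stand-alone statement; once this is secured the remaining slope arithmetic is essentially a one-line calculation.
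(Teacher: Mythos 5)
Your argument is correct, but it is organized rather differently from the paper's. The paper never introduces the unknown first kink $b$ of $\psi$ and compares the two compositions; instead it bootstraps the identity $\psi(x)=\psi'(0)\,x$ outward from a small interval $(0,\varepsilon)$, applying $\psi\circ g=g\circ\psi$ repeatedly to stretch the interval of validity by the factor $g'(0)$ at each step until it covers $\left(0,\frac{a\,g'(0)}{\psi'(0)}\right)$, and then evaluates the functional equation for $x$ just past $\frac{a}{\psi'(0)}$, where $g\circ\psi$ falls onto the second linear piece of $g$, to show that $\psi$ acquires slope $g'(a+)\psi'(0)/g'(0)\neq\psi'(0)$ immediately to the right of $\frac{a\,g'(0)}{\psi'(0)}$. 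Your kink-location bookkeeping is shorter and has the merit of making explicit the inequalities $g'(0)>1$ and $\psi'(0)>1$, which the paper uses only implicitly (it cites the later Lemma~\ref{lema:3.15} for $\psi'(0)>1$ and never isolates $g'(0)>1$; your derivation of it from the density of $g^{-\infty}(0)$ is the right one). The one point that deserves an added sentence is your tacit claim that the first kink of a composition of piecewise linear maps \emph{equals} the minimum of the two candidate locations: when the two candidates coincide, the slope changes could in principle cancel and the composition could stay linear past that point. In the two degenerate cases $a=b/g'(0)$ and $b=a/\psi'(0)$ the \emph{other} composition still has a genuine kink strictly inside the common interval of linearity (a simultaneous coincidence is impossible because $g'(0)\psi'(0)>1$), so your contradiction survives, but as written this step is asserted rather than justified.
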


\begin{proof}
It follows from Lemma~\ref{lema:3.15} that $\psi\, '(0)>1$.

Since $\psi$ is piecewise linear, then there is $\varepsilon>0$
such that $\psi(x)=\psi\, '(0)\cdot x$ for all $x\in (0,
\varepsilon)$.

Suppose that $\varepsilon>0$ is such that $\psi\, '(0)\cdot
\varepsilon <a$. Notice that in this case $x<a$ for all $x\in (0,
\varepsilon)$, because $\psi\, '(0)>1$. Thus, for all $x \in (0,
\varepsilon)$ we have that $\psi\, '(x)=\psi\, '(0)\cdot x$,
$(g\circ \psi)(x) = g'(0)\cdot \psi\, '(0)\cdot x$ and
$g(x)=g'(0)\cdot x$. Now, by~\eqref{eq:1.2}, $$ \psi(x) = \psi\,
'(0)\cdot x
$$ for all $x\in g\circ (0,
\varepsilon).$

Thus, for every $x\in g\circ \left( 0, \frac{a}{\psi\,
'(0)}\right)$ we have that $\psi(x) = \psi\, '(0)\cdot x$. Notice
that $g\circ \left( 0, \frac{a}{\psi\, '(0)}\right) = \left( 0,
\frac{a\cdot g'(0)}{\psi\, '(0)}\right)$. Remark that
$\frac{a\cdot g'(0)}{\psi\, '(0)}< a$.

Take an arbitrary $\delta,\, 0<\delta <a\cdot (g'(0)-1)$ is such
that $g$ is linear on $(a,\, a+\delta)$. Then for every $x\in
\left( \frac{a+\delta}{\psi\, '(0)}, \frac{a\cdot g'(0)}{\psi\,
'(0)}\right)$ we have that $\psi(x)=\psi\, '(0)\cdot x$, because
$x<\frac{a\cdot g'(0)}{\psi\, '(0)}$; also $g(x) = g'(0)\cdot x$,
because $x<a$, and $(g\circ \psi)(x) = g'(a+)\cdot (\psi\,
'(0)\cdot x -a) + g(a)$.

Now, it follows from~\eqref{eq:1.2} that
\begin{equation}\label{eq:3.21} \psi(g'(0)\cdot x) =g'(a+)\cdot
(\psi\, '(0)\cdot x -a) + g(a).
\end{equation} Denote %
$u = g'(0)\cdot x$ and remark that $u\in \left(
\frac{(a+\delta)\cdot g'(0)}{\psi\, '(0)}, \frac{a\cdot
(g'(0))^2}{\psi\, '(0)}\right)$, whenever $x\in \left(
\frac{a+\delta}{\psi\, '(0)}, \frac{a\cdot g'(0)}{\psi\,
'(0)}\right)$. Rewrite~\eqref{eq:3.21} as $$ \psi(u) = g'(a+)\cdot
\left(\frac{\psi\, '(0)\cdot u}{g'(0)} -a\right) + g(a).
$$
Clearly, if $g'(0)\neq g'(a+)$, then there is $u\in \left(
\frac{(a+\delta)\cdot g'(0)}{\psi\, '(0)}, \frac{a\cdot
(g'(0))^2}{\psi\, '(0)}\right)$ such that $\psi(u) \neq \psi\,
'(0)\cdot u$.
Remark that %
if $\delta \approx 0$, then $\frac{(a+\delta)\cdot g'(0)}{\psi\,
'(0)} \approx \frac{a\cdot g'(0)}{\psi\, '(0)}$.
\end{proof}

\begin{remark}\label{rem:3.11}
For any $n\geq 1$ and $k,\, 0\leq k\leq 2^{n-1}$ we have that
$$ \mu_{n,k}(f) = \frac{k}{2^{n-1}}.
$$
\end{remark}

The next is the direct corollary from Remark~\ref{rem:3.11}.

\begin{remark}\label{rem:3.12}
Let $t\geq 1$, and $\xi_t$ be defined by~\eqref{eq:2.1}. For any
$n\geq 1$ and $k,\, k\leq \left[ \frac{2^{n-1}}{t}\right]$ we have
that
$$\xi_t(\mu_{n,k}(f))= \mu_{n,kt}(f).$$
\end{remark}

\begin{lemma}\label{lema:3.13}
For any self semi conjugacy $\psi$ of $g$ there exists a self semi
conjugacy $\xi$ of $f$ such that $$ \psi = h\circ \xi\circ h^{-1}.
$$
\end{lemma}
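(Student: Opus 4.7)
The plan is to define $\xi$ by conjugating $\psi$ with $h$ and then just verify directly that $\xi$ commutes with the tent map. Explicitly, I would set
\[
\xi := h^{-1} \circ \psi \circ h.
\]
Since $h$ is a topological conjugacy, it is a homeomorphism, so $\xi$ is automatically continuous; and because $\psi$ and $h$ are both surjective onto $[0,1]$, so is $\xi$. Thus $\xi$ automatically fits the definition of a self semi conjugacy, provided it solves $\xi \circ f = f \circ \xi$.

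The verification of the commutation relation is the only real computation, and it is essentially forced by rewriting the conjugacy identity $g \circ h = h \circ f$ as $h^{-1}\circ g = f \circ h^{-1}$ and applying it once on each side. Concretely I would write
\[
\xi \circ f \;=\; h^{-1} \circ \psi \circ h \circ f \;=\; h^{-1} \circ \psi \circ g \circ h \;=\; h^{-1} \circ g \circ \psi \circ h \;=\; f \circ h^{-1} \circ \psi \circ h \;=\; f \circ \xi,
\]
where the middle equality uses~\eqref{eq:1.2}. The formula $\psi = h\circ \xi \circ h^{-1}$ then follows by composing on the left by $h$ and on the right by $h^{-1}$ in the definition of $\xi$.

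The main (mild) obstacle is just bookkeeping of the direction of the conjugacy: one has to keep straight that $h$ goes from the tent map $f$ to $g$, so that $h^{-1}\circ g = f\circ h^{-1}$ rather than the other way around; with the wrong orientation the chain of equalities would fail. Once that is set, the proof is a one-line calculation and does not require Theorem~\ref{th:3} at all (although it would yield, as an immediate corollary, that $\psi = h\circ \xi_t \circ h^{-1}$ for some $t\in \mathbb{N}$, which is the form in which the lemma is actually used in the sequel).
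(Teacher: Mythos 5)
Your proposal is correct and is essentially the paper's own argument: the paper also obtains $\xi$ by conjugating $\psi$ with $h$ (factoring the composition through the intermediate semi conjugation $\eta=\psi\circ h$ from $f$ to $g$), arriving at the same $\xi=h^{-1}\circ\psi\circ h$. Your version simply carries out the commutation check $\xi\circ f=f\circ\xi$ explicitly, which the paper leaves implicit in its two asserted correspondences.
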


\begin{proof}
Notice that $$
\left\{\begin{array}{l}\psi  = \eta\circ h^{-1},\\
\eta = \psi\circ h.
\end{array}\right.$$ provides %
the correspondence between self semi conjugations $\psi$ of $g$,
and and the semi conjugations $\eta$ from $f$ to $g$ (see
Fig.~\ref{fig:03}a).

\begin{figure}[ht]
\begin{minipage}[h]{0.45\linewidth}
$$
\xymatrix{ [0,\, 1] \ar@/_3pc/@{-->}_{\psi}[dd] \ar^{g}[rr]
\ar^{h^{-1}}[d] && [0,\, 1] \ar_{h^{-1}}[d] \ar@/^3pc/@{-->}^{\psi}[dd]\\
[0,\, 1] \ar^{f}[rr] \ar_{\eta}[d] && [0,\, 1] \ar^{\eta}[d]\\
[0,\, 1] \ar^{g}[rr] && [0,\, 1] }$$ \centerline{a. }
\end{minipage}
\hfill
\begin{minipage}[h]{0.45\linewidth}
\begin{center}
$$ \xymatrix{ [0,\, 1]
\ar@/_3pc/@{-->}_{\xi}[dd] \ar^{f}[rr]
\ar_{\eta}[d] && [0,\, 1] \ar^{\eta}[d] \ar@/^3pc/@{-->}^{\xi}[dd]\\
[0,\, 1] \ar^{g}[rr] \ar^{h^{-1}}[d] && [0,\, 1] \ar_{h^{-1}}[d]\\
[0,\, 1] \ar^{f}[rr] && [0,\, 1] }$$ \centerline{b. }\end{center}
\end{minipage}
\hfill \caption{ Illustrations} \label{fig:03}
\end{figure}
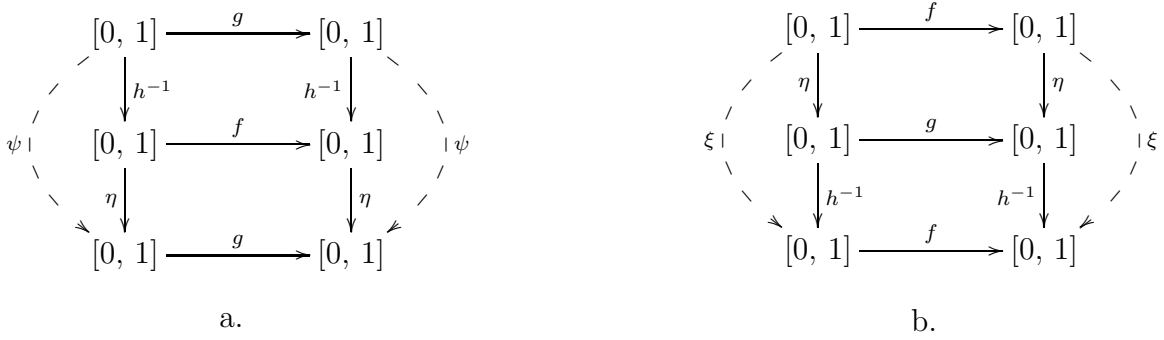

Now the correspondence
$$
\left\{\begin{array}{l}\xi  = h^{-1}\circ \eta,\\
\eta = h\circ \xi
\end{array}\right.$$ %
relates conjugacies $\eta$ from $f$ to $g$, and the self semi
conjugacies $\xi$ of the tent map (see Fig.~\ref{fig:03}b). These
relations gives the necessary equality
$$ \psi = \eta\circ h^{-1} = (h\circ \xi)\circ h^{-1}.
$$
\end{proof}

The next corollary follows from Theorem~\ref{th:3} and
Lemma~\ref{lema:3.13}.

\begin{corollary}\label{cor:3.14}
For any self semi conjugacy $\psi$ of $g$ there exists $t\in
\mathbb{N}$ such that $$\psi = h\circ \xi_t\circ h^{-1},
$$ where $\xi_t$ is determined by~\eqref{eq:2.1}.
\end{corollary}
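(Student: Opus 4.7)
The plan is to simply chain together Lemma~\ref{lema:3.13} and Theorem~\ref{th:3}, since the corollary is essentially just the concatenation of these two results. I would proceed by first applying Lemma~\ref{lema:3.13} to the given self semi conjugacy $\psi$ of $g$, which produces a self semi conjugacy $\xi$ of the tent map $f$ satisfying
\begin{equation*}
\psi = h\circ \xi\circ h^{-1}.
\end{equation*}

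Then I would invoke part~1 of Theorem~\ref{th:3}, which asserts that every self semi conjugacy of $f$ is of the form $\xi_t$ given by~\eqref{eq:2.1} for some $t\in \mathbb{N}$. Applying this classification to the $\xi$ produced in the previous step yields some $t$ with $\xi = \xi_t$, and substituting into the displayed equality above gives exactly
\begin{equation*}
\psi = h\circ \xi_t\circ h^{-1},
\end{equation*}
which is the desired conclusion.

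There is no serious obstacle here: both inputs have already been established earlier in the paper, and the argument is a one-line composition. The only thing to keep in mind is to point the reader correctly to both references and to note that $h$ is the fixed conjugacy from $f$ to $g$ introduced at the start of the subsection (guaranteed by Theorem~\ref{th:2} under the standing hypothesis $\overline{g^{-\infty}(0)} = [0,1]$), so the statement makes sense.
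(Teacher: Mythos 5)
Your proposal is correct and matches the paper exactly: the paper states that the corollary ``follows from Theorem~\ref{th:3} and Lemma~\ref{lema:3.13}'' and gives no further argument, which is precisely the two-step composition you describe. Nothing is missing.
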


Due to Corollary~\ref{cor:3.14}, for every $t\geq 1$ denote
\begin{equation}\label{eq:3.22} \psi_t = h\circ \xi_t\circ h^{-1}.
\end{equation}

\begin{lemma}\label{lema:3.15}
For any $t\in \mathbb{N}$ the equality
$$ \psi_t(\mu_{n,k}(g)) = \mu_{n,kt}(g)
$$ holds for all $n\geq 1$ and $k,\, k\leq \left[
\frac{2^{n-1}}{t}\right]$, where $\psi_t$ is determined
by~\eqref{eq:3.22}.
\end{lemma}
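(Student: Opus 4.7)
The plan is to chain together three facts we already have: Lemma~\ref{lema:2.5} which transports the points $\mu_{n,k}$ under conjugacies, Remark~\ref{rem:3.12} which describes the action of $\xi_t$ on $\mu_{n,k}(f)$, and the definition $\psi_t = h\circ \xi_t\circ h^{-1}$ from~\eqref{eq:3.22}.

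First I would apply Lemma~\ref{lema:2.5} to the conjugacy $h$ from the tent map $f$ to $g$, obtaining
\[
h(\mu_{n,k}(f)) = \mu_{n,k}(g) \quad \text{for all } n\geq 1,\ 0\leq k\leq 2^{n-1}.
\]
In particular $h^{-1}(\mu_{n,k}(g)) = \mu_{n,k}(f)$. Next, for $k\leq \left[\frac{2^{n-1}}{t}\right]$, I would apply Remark~\ref{rem:3.12} to get $\xi_t(\mu_{n,k}(f)) = \mu_{n,kt}(f)$; here the condition on $k$ guarantees that $kt\leq 2^{n-1}$, so $\mu_{n,kt}(f)$ is well-defined.

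Finally I would apply Lemma~\ref{lema:2.5} once more, this time in the opposite direction, to rewrite $h(\mu_{n,kt}(f)) = \mu_{n,kt}(g)$. Concatenating these three steps along the definition of $\psi_t$ yields
\[
\psi_t(\mu_{n,k}(g)) = h\bigl(\xi_t(h^{-1}(\mu_{n,k}(g)))\bigr) = h\bigl(\xi_t(\mu_{n,k}(f))\bigr) = h(\mu_{n,kt}(f)) = \mu_{n,kt}(g),
\]
which is exactly the claim. There is no real obstacle here; the lemma is a bookkeeping consequence of Remark~\ref{rem:3.12} transported through the conjugacy $h$, so the only thing to watch is that the range of indices $k\leq \left[\frac{2^{n-1}}{t}\right]$ matches the range in which Remark~\ref{rem:3.12} applies on the tent-map side.
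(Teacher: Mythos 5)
Your proposal is correct and follows exactly the same chain as the paper's own proof: unfold the definition $\psi_t = h\circ\xi_t\circ h^{-1}$, apply Lemma~\ref{lema:2.5} to replace $h^{-1}(\mu_{n,k}(g))$ by $\mu_{n,k}(f)$, apply Remark~\ref{rem:3.12}, and apply Lemma~\ref{lema:2.5} again. Nothing is missing.
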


\begin{proof}
For every $n\in \mathbb{N}$ and $k,\, k\leq \left[
\frac{2^{n-1}}{t}\right]$ we have that $$ \psi_t(\mu_{n,k}(g))
\stackrel{\text{by~\eqref{eq:3.22}}}{=} (h\circ \xi_t\circ
h^{-1})(\mu_{n,k}(g)) \stackrel{\text{by Lemma~\ref{lema:2.5}}}{=}
(h\circ \xi_t)(\mu_{n,k}(f)) \stackrel{\text{by
Rem.~\ref{rem:3.12}}}{=} $$ $$=h(\mu_{n,kt}(f)) \stackrel{\text{by
Lemma~\ref{lema:2.5}}}{=} \mu_{n,kt}(g),
$$ which is necessary.
\end{proof}

\begin{lemma}\label{lema:3.16}
Let $a$ be the first kink of $g$. For every $n, k$ such that
$\mu_{n,k}(g)< a$ we have that $$ \mu_{n,2k}(g) =g'(0)\cdot
\mu_{n,k}(g).
$$
\end{lemma}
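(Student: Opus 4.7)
The plan is to combine the linearity of $g$ on $[0, a]$ with the action of $g$ on the points $\mu_{n,k}(g)$ provided by Lemma~\ref{lema:2.4}, and then translate back to an index at level $n$ using the refinement relation of Remark~\ref{rem:2.3}.

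Since $a$ is the first kink of $g$, the restriction of $g$ to $[0, a]$ is affine with slope $g'(0)$. Thus the hypothesis $\mu_{n,k}(g) < a$ gives immediately the local computation
$$g(\mu_{n,k}(g)) = g'(0)\cdot \mu_{n,k}(g).$$
On the other hand, Lemma~\ref{lema:2.4} expresses the left-hand side as $\mu_{n-1,k}(g)$, and Remark~\ref{rem:2.3} (read at level $n-1$) identifies $\mu_{n-1,k}(g)$ with $\mu_{n,2k}(g)$. Concatenating these three equalities produces the desired chain
$$\mu_{n,2k}(g) \;=\; \mu_{n-1,k}(g) \;=\; g(\mu_{n,k}(g)) \;=\; g'(0)\cdot \mu_{n,k}(g).$$

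The only point that needs verification is that $k$ falls inside the range $0\leq k\leq 2^{n-2}$ under which both Lemma~\ref{lema:2.4} and Remark~\ref{rem:2.3} apply. For this, I would observe that the critical point $v$ of $g$ equals $\mu_{n,2^{n-2}}(g)$: by Remark~\ref{rem:3.11} we have $\mu_{n,2^{n-2}}(f)=1/2$, which is the critical point of $f$, and Lemma~\ref{lema:2.5} transports this equality through the conjugacy $h$ to $\mu_{n,2^{n-2}}(g)=v$. Since $a\leq v$, the hypothesis $\mu_{n,k}(g)<a$ forces $k<2^{n-2}$, so the admissibility condition is automatic. No real obstacle is expected: the argument is a short concatenation of known facts with one local linearity computation, and the edge case $n=1$ (where Lemma~\ref{lema:2.4} does not apply) is trivial, since then $\mu_{n,k}(g)<a$ forces $k=0$ and both sides of the claimed equality vanish.
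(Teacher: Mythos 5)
Your proof is correct and follows essentially the same route as the paper: the paper's own argument is exactly the concatenation $g(\mu_{n,k}(g)) = g'(0)\cdot\mu_{n,k}(g)$ by linearity of $g$ on $(0,a)$, combined with Lemma~\ref{lema:2.4} and Remark~\ref{rem:2.3}. Your additional verification that $k\leq 2^{n-2}$ (so that the cited lemma and remark apply) is a sound extra check that the paper omits.
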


\begin{proof}
Since $g$ is linear on $(0, a)$ and $g(0)=0$, then for every $n,k$
such that $\mu_{n,k}(g)< a$ we have that $g(\mu_{n,k}(g)) =
g'(0)\cdot \mu_{n,k}(g)$. From another hand, $$ g(\mu_{n,k}(g))
\stackrel{\text{by Lemma~\ref{lema:2.4}}}{=} \mu_{n-1,k}(g)
\stackrel{\text{by Rem.~\ref{rem:2.3}}}{=} \mu_{n,2k}(g),
$$ and the lemma follows.
\end{proof}

The following lemma is similar to Lemma~\ref{lema:3.16}.

\begin{lemma}\label{lema:3.17}
Suppose that $\psi_t$ is piecewise linear for some $t\geq 1$. Let
$p$ be the first kink of $\psi_t$. For every $n, k$ such that
$\mu_{n,k}(g)< \psi(p)$ we have that $$ \mu_{n,tk}(g)
=\psi_t'(0)\cdot \mu_{n,k}(g).
$$
\end{lemma}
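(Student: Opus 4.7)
The plan is to mimic the proof of Lemma~\ref{lema:3.16} almost verbatim, with $\psi_t$ playing the role of $g$, $\psi_t'(0)$ the role of $g'(0)$, and Lemma~\ref{lema:3.15} replacing the combination Lemma~\ref{lema:2.4}+Remark~\ref{rem:2.3}. Indeed, Lemma~\ref{lema:3.15} is the analogue, for the self semi-conjugacy $\psi_t$, of the identity $g(\mu_{n,k}(g))=\mu_{n,2k}(g)$ used in the proof of Lemma~\ref{lema:3.16}, but with the index multiplied by $t$ rather than by $2$. So the whole argument should take only a few lines.

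The first step is to establish the linear formula $\psi_t(x)=\psi_t'(0)\cdot x$ on $[0,p]$. For this I would first check that $\psi_t(0)=0$: the conjugacy $h$ between $f$ and $g$ sends $0$ to $0$ (as both maps fix the left endpoint), and $\xi_t(0)=0$ by~\eqref{eq:2.1}, so the definition~\eqref{eq:3.22} yields
\[
\psi_t(0)=h(\xi_t(h^{-1}(0)))=h(\xi_t(0))=h(0)=0.
\]
Combined with the hypothesis that $\psi_t$ is piecewise linear with first kink at $p$, the linear formula on $[0,p]$ follows at once.

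The second step is to evaluate at $x=\mu_{n,k}(g)$. Whenever $\mu_{n,k}(g)$ lies in the initial linear piece of $\psi_t$ (which is what the stated bound $\mu_{n,k}(g)<\psi(p)$ encodes, since $\psi_t$ is monotone from $0$ to $\psi_t(p)$ on $[0,p]$), the linear formula gives $\psi_t(\mu_{n,k}(g))=\psi_t'(0)\cdot\mu_{n,k}(g)$. On the other hand, Lemma~\ref{lema:3.15} provides $\psi_t(\mu_{n,k}(g))=\mu_{n,tk}(g)$, provided $k\le[2^{n-1}/t]$ so that $\mu_{n,tk}(g)$ is defined. Equating the two expressions for $\psi_t(\mu_{n,k}(g))$ yields the claimed equality. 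No substantive obstacle is anticipated, as the argument is a direct transcription of Lemma~\ref{lema:3.16}; the only bookkeeping is to reconcile the range of $k$ in which $\mu_{n,k}(g)$ falls in the linear piece of $\psi_t$ with the range $k\le[2^{n-1}/t]$ needed by Lemma~\ref{lema:3.15}, both of which are forced by the hypothesis.
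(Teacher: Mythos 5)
Your proposal is correct and matches the paper's own (one-line) proof, which likewise combines the linearity of $\psi_t$ at $0$ with Lemma~\ref{lema:3.15}; you have merely filled in the routine details ($\psi_t(0)=0$ and the equating of the two expressions for $\psi_t(\mu_{n,k}(g))$). The only point worth noting is the slight mismatch in the hypothesis $\mu_{n,k}(g)<\psi(p)$ versus the condition $\mu_{n,k}(g)<p$ actually needed to stay on the first linear piece, which you flag and which appears to be an imprecision in the statement rather than in your argument.
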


\begin{proof}
It follows from the linearity of $\psi_t$ at $0$, and from
Lemma~\ref{lema:3.15}.
\end{proof}

We will need the next technical fact.
\begin{lemma}\label{lema:3.18}
For every $n,k$ the set $$ \left\{\frac{k\cdot t^p}{2^{n+m}}, p\in
\mathbb{Z}_+, m\in \mathbb{Z} \right\}\cap \left[\frac{1}{2^n},
\frac{1}{2^{n-1}}\right)$$ is dense in $\left[\frac{1}{2^n},
\frac{1}{2^{n-1}}\right)$
\end{lemma}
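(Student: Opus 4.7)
The plan is to reduce the statement to a classical equidistribution fact. First I would reparametrize: setting $q=n+m$ (which ranges over all of $\mathbb{Z}$ as $m$ does), the set in question becomes
$$S \;=\; \bigl\{\,k\,t^{p}/2^{q}\ :\ p\in\mathbb{Z}_{+},\ q\in\mathbb{Z}\,\bigr\}.$$
Applying $\log_{2}$, which is a homeomorphism $(0,\infty)\to\mathbb{R}$, the statement becomes: the set
$$A \;=\; \bigl\{\,\log_{2}k + p\log_{2}t - q\ :\ p\in\mathbb{Z}_{+},\ q\in\mathbb{Z}\,\bigr\}$$
has dense intersection with $[-n,-n+1)$.

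Next I would note that $A$ is $\mathbb{Z}$-periodic (translating $q$ shifts points of $A$ by arbitrary integers), so density of $A\cap[-n,-n+1)$ in $[-n,-n+1)$ is equivalent to density of the image of $A$ in $\mathbb{R}/\mathbb{Z}$. That image equals
$$\log_{2}k \;+\;\bigl\{\,p\log_{2}t\bmod 1\ :\ p\in\mathbb{Z}_{+}\,\bigr\},$$
which by the Weyl--Kronecker theorem on orbits of irrational rotations is dense in $\mathbb{R}/\mathbb{Z}$ precisely when $\log_{2}t$ is irrational, i.e.\ when $t$ is not a power of $2$. In the ambient setting of the paper---the existence of a non-trivial piecewise linear commutator---this non-degeneracy of $t$ is automatic, so Kronecker density applies.

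The main obstacle is really just the bookkeeping surrounding the degenerate case. If $t$ were a power of $2$, then $\log_{2}t\in\mathbb{Z}$, the set $A$ would be a discrete subset of $\mathbb{R}$, and the conclusion of the lemma would fail; so one must flag that the hypothesis on $t$ coming from Theorem~\ref{th:3} and Corollary~\ref{cor:3.14} is being invoked here. Once the non-degeneracy of $t$ is fixed, the remaining work is mechanical: one verifies the $\mathbb{Z}$-periodicity of $A$, cites Kronecker density in $\mathbb{R}/\mathbb{Z}$, lifts back to density of $A$ in $\mathbb{R}$, restricts to the length-one window $[-n,-n+1)$, and finally transports the density back through $\log_{2}$ to the interval $[1/2^{n},1/2^{n-1})$.
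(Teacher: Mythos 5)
Your proof is correct and takes essentially the same route as the paper's: apply $\log_2$, observe that the problem reduces to the density of the orbit of $\log_2 k$ under the rotation by $\log_2 t$ on the unit circle, and invoke Kronecker/Weyl. Your remark that the conclusion requires $t$ not to be a power of $2$ is well taken --- the paper's proof also uses this hypothesis even though the lemma's statement omits it, and in context it is supplied by the non-triviality of the commutator.
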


\begin{proof}
For every $p\geq 1$ there is the unique $m\geq 1$ such that $$
\frac{k\cdot t^p}{2^{n+m}}\in \left[\frac{1}{2^n},
\frac{1}{2^{n-1}}\right).
$$ Thus, denote $m_p,\,\, t\geq 1$ be such that $$
\frac{k\cdot t^p}{2^{n+{m_p}}}\in \left[\frac{1}{2^n},
\frac{1}{2^{n-1}}\right).
$$ The latter means that $m_p,\,\, t\geq 1$ be such that $$
k\cdot t^p\in \left[ 2^{m_p}, 2\cdot 2^{m_p}\right].
$$ Apply $\log_2$ and obtain $$
p\cdot \log_2t + \log_2k \in [m_p, m_p+1],
$$ whence $$
p\mapsto p\cdot \log_2t + \log_2k-m_p
$$ will define the $p$-th iteration of the rotation by $\log_2t$
of the point $\log_2k$ on the unit circle. Since $t$ is not a
power of $2$, then $\log_2t$ is irrational, and lemma follows from
the density of any trajectory of the irrational rotation of the
unit circle.
\end{proof}

\begin{lemma}\label{lema:3.19}If $\psi_t$ is piecewise linear, then
$$
\psi_t'(0) = (g'(0))^{\log_2t}.
$$
\end{lemma}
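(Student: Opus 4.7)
The plan is to convert the scaling relations near $0$ into a functional equation for the conjugacy $h$, and then use the density provided by Lemma~\ref{lema:3.18} together with continuity to force the identity $\psi_t'(0) = (g'(0))^{\log_2 t}$.

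Set $r = g'(0)$ and $s = \psi_t'(0)$. Combining Remark~\ref{rem:2.3} with Lemma~\ref{lema:3.16} gives $\mu_{n+1, k}(g) = r^{-1} \mu_{n, k}(g)$ in the linear regime of $g$, while Lemma~\ref{lema:3.17} gives $\mu_{n, kt}(g) = s\, \mu_{n, k}(g)$ in the linear regime of $\psi_t$. Iterating both, one obtains
$$\mu_{n+m, k t^p}(g) = r^{-m} s^p\, \mu_{n, k}(g)$$
for all $p \in \mathbb{Z}_+$, $m \in \mathbb{Z}$ for which the intermediate points stay below the first kinks of $g$ and $\psi_t$. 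By Lemma~\ref{lema:2.5} and Remark~\ref{rem:3.11}, $\mu_{N, K}(g) = h(K/2^{N-1})$, so with $y_0 = k/2^{n-1}$ this reads $h(y_0 \cdot t^p/2^m) = (s^p/r^m)\, h(y_0)$.

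Next, fix $k = 1$ and take $n$ large so that $y_0$ is small, and choose $M > 0$ with $y_0 M$ still in the common linear regime. Define $\Phi(x) = h(y_0 x)/h(y_0)$ on $[0, M]$; then $\Phi$ is continuous, and $\Phi(t^p/2^m) = s^p/r^m$ at every $t^p/2^m \in [0, M]$. Lemma~\ref{lema:3.18} together with dyadic rescaling makes these points dense in $[0, M]$, and since $(t^{p_1}/2^{m_1})(t^{p_2}/2^{m_2}) = t^{p_1+p_2}/2^{m_1+m_2}$, the function $\Phi$ is multiplicative on this dense subset. Continuity extends multiplicativity to all of $[0, M]$, and passing to $u = \log x$ reduces the identity $\Phi(x_1 x_2) = \Phi(x_1) \Phi(x_2)$ to Cauchy's additive equation on an interval; its continuous solutions are linear, so $\Phi(x) = x^c$ for some constant $c$.

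To finish, the conjugacy identities give $h(2y_0) = g(h(y_0)) = r\, h(y_0)$ and $h(t y_0) = \psi_t(h(y_0)) = s\, h(y_0)$, so $\Phi(2) = r$ and $\Phi(t) = s$. The first pins down $c = \log_2 r$, and the second then yields $s = t^c = t^{\log_2 r} = r^{\log_2 t}$, as required. The main technical hurdle is bookkeeping: each iteration of Lemmas~\ref{lema:3.16} or~\ref{lema:3.17} requires every intermediate $\mu$-point to remain below the first kinks of $g$ and $\psi_t$. Taking $n$ large enough and $M$ small enough makes the intermediate values $y_0 \cdot t^{p'}/2^{m'}$ uniformly small, so both linear regimes apply throughout the iteration.
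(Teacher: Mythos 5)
Your argument is correct and uses the same ingredients as the paper (Lemma~\ref{lema:2.5}, Remark~\ref{rem:3.11}, the scaling relations of Lemmas~\ref{lema:3.16} and~\ref{lema:3.17}, and the density statement of Lemma~\ref{lema:3.18}), but the key step is genuinely different. The paper never solves a functional equation at this stage: it picks sequences $(p_i,s_i)$ with $t^{p_i}2^{-s_i}\to 1$, uses the scaling relations to get $(g'(0))^{-s_i}(\psi_t'(0))^{p_i}\,\mu_{n,k}(g)\to\mu_{n,k}(g)$, and then extracts $\psi_t'(0)=(g'(0))^{\log_2 t}$ directly from $(g'(0))^{-s_i}(\psi_t'(0))^{p_i}\to 1$ together with $p_i\log_2 t-s_i\to 0$ and $p_i\to\infty$. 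You instead encode the relations as $\Phi(t^p/2^m)=\bigl(\psi_t'(0)\bigr)^p/\bigl(g'(0)\bigr)^m$ for $\Phi(x)=h(y_0x)/h(y_0)$, extend multiplicativity from the dense set by continuity, and solve the resulting Cauchy equation to get $\Phi(x)=x^c$. Your route proves strictly more: it recovers the local power-law form of $h$, which the paper only establishes later, in the proof of Proposition~\ref{prop:1} via the ansatz \eqref{eq:3.29}--\eqref{eq:3.32}; so your argument would let one shortcut part of that proof. The price is the extra machinery (well-definedness of $\Phi$ on the dense set, which needs the irrationality of $\log_2 t$, and the restricted-domain Cauchy equation), whereas the paper's limit argument is more elementary.

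One point deserves more care than your closing sentence gives it: the iterated identity $\mu_{n+m,kt^p}(g)=r^{-m}s^p\mu_{n,k}(g)$ is only legitimate if the intermediate $\mu$-points stay below the kinks, and the intermediate values are \emph{not} all of the form $y_0t^{p'}/2^{m'}$ with small magnitude unless you order the operations correctly --- applying all $p$ multiplications by $t$ before any division by $2$ would pass through $y_0t^{p}$, which is large. The fix is easy (perform the divisions by $2$ first, so every partial product is bounded by $\max(1,M)\,y_0$), and the paper's own display preceding \eqref{eq:3.24} is at least as cavalier about this, but you should state the ordering explicitly rather than asserting that all intermediate values are uniformly small.
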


\begin{proof}
For every $n\geq 1$ and $k\in \{0,\ldots, 2^{n-1}\}$ it follows
from Lemma~\ref{lema:3.18} that there exist sequences $(s_i),\
i\geq 1$ and $(p_i),\ i\geq 1$ such that
\begin{equation}\label{eq:3.23} \frac{k\cdot t^{p_i}}{2^{n+{s_i}}}
\rightarrow \frac{k}{2^{n-1}}
\end{equation}
for $i\rightarrow \infty$. By Remark~\ref{rem:3.11} it means that
$$ \mu_{n,k\cdot t^{p_i}\cdot 2^{-s_i}}(f) \rightarrow
\mu_{n,k}(f).
$$ By continuity of $h$ obtain $$
h(\mu_{n,k\cdot t^{p_i}\cdot 2^{-s_i}}(f)) \rightarrow
h(\mu_{n,k}(f)).
$$ and Lemma~\ref{lema:2.5}, $$
\mu_{n,k\cdot t^{p_i}\cdot 2^{-s_i}}(g) \rightarrow \mu_{n,k}(g).
$$ Now simplify $$
\mu_{n,k\cdot t^{p_i}\cdot 2^{-s_i}}(g) \stackrel{\text{by~
Lemma~\ref{lema:3.16}}}{=} (g'(0))^{-s_i}\cdot \mu_{n,k\cdot
t^{p_i}}(g) \stackrel{\text{by~ Lemma~\ref{lema:3.17}}}{=}
(g'(0))^{-s_i}\cdot (\psi_t'(0))^{p_i} \mu_{n,k}(g),
$$ whence \begin{equation}\label{eq:3.24}
(g'(0))^{-s_i}\cdot (\psi_t'(0))^{p_i} \rightarrow 1.
\end{equation}

It follows from~\eqref{eq:3.23} that
$$t^{p_i}\cdot 2^{-s_i}\rightarrow 1.
$$ Take $\log_2$ of both sides of the last limit and obtain that
$$
p_i\log_2t - s_i\rightarrow 0,
$$ whence
the sequence \begin{equation}\label{eq:3.25} \varepsilon_i =
p_i\log_2t - s_i
\end{equation} is such that
\begin{equation}\label{eq:3.26}
\varepsilon_i \rightarrow 0,\text{ when } i\rightarrow \infty.
\end{equation} %

Plug~\eqref{eq:3.25} into~\eqref{eq:3.24}, and obtain $$
(g'(0))^{\varepsilon_i -p_i\log_2t}\cdot (\psi_t'(0))^{p_i}
\rightarrow 1,
$$ whence

$$
(g'(0))^{\varepsilon_i}\cdot
\left(\frac{\psi'(0)}{(g'(0))^{\log_2t}}\right)^{p_i} \rightarrow
1.
$$ By~\eqref{eq:3.26}, the sequence $(g'(0))^{\varepsilon_i}$ is
bounded by positive numbers, whence $$
\frac{\psi'(0)}{(g'(0))^{\log_2t}} =1,
$$ and the lemma follows.
\end{proof}

We are ready now to prove Proposition~\ref{prop:1}.

\begin{proof}[Proof of Proposition~\ref{prop:1}]
It follows from Lemmas~\ref{lema:2.5} and~\ref{lema:3.16} that $$
h(\mu_{n,2k}(f)) =g'(0)\cdot h(\mu_{n,k}(f))
$$ for all $n\geq 1$ and all $k$, such that $\mu_{n,k}(g)< a$,
where $a$ is the first kink of $g$. Now, since, by
Remark~\ref{rem:3.11}, the set $\{\mu_{n,k}(f),\, n\geq 1, 0\leq
k\leq 2^{n-1}\}\cap [0, a]$ is dense in $[0, a]$, and $
\mu_{n,2k}(f) = 2\cdot \mu_{n,k}(f)$ for all $n,\, k$, then
\begin{equation}\label{eq:3.27}
h(2x) = g'(0)\cdot h(x)
\end{equation} for all $x<h^{-1}(a)$.
We can analogously use Lemma~\ref{lema:3.17} to conclude that
\begin{equation}\label{eq:3.28} h(tx) = \psi_t'(0)\cdot h(x)
\end{equation} for all $x\in [0, h^{-1}(p)]$, where $p$ is the
first kink of $\psi_t$. There there exists $x^{*}$ such
that~\eqref{eq:3.27} and~\eqref{eq:3.28} holds for all $x\in [0,
x^{*}]$.

We can find the solution of~\eqref{eq:3.27} as
\begin{equation}\label{eq:3.29} h(x) = x^{\log_2g'(0)}\cdot
\omega(\log_2x),
\end{equation} where %
$\omega: \mathbb{R}\rightarrow \mathbb{R}$ is an unknown
continuous function that that \begin{equation}\label{eq:3.30}
\omega(x+1) =\omega(x)
\end{equation} for all $x\in \mathbb{R}$.

By Lemma~\ref{lema:3.19} rewrite the functional
equation~\eqref{eq:3.28} as
$$
h(tx) = (g'(0))^{\log_2t}\cdot h(x).
$$
The latter equality and~\eqref{eq:3.29} give
$$
(tx)^{\log_2g'(0)}\cdot \omega(\log_2tx) = (g'(0))^{\log_2t}\cdot
x^{\log_2g'(0)}\cdot \omega(\log_2x).
$$
After the cancellation by $x^{\log_2g'(0)}$, we obtain
$$
t^{\log_2g'(0)}\cdot \omega(\log_2tx) = (g'(0))^{\log_2t}\cdot
\omega(\log_2x).
$$

Notice, that $(g'(0))^{\log_2t} =t^{\log_2g'(0)}$, whence,
reminding~\eqref{eq:3.30}, get
\begin{equation}\label{eq:3.31}
\left\{\begin{array}{ll}%
\omega(x+1) = \omega(1),\\
\omega(x +\log_2t) = \omega(x).
\end{array}\right.
\end{equation}

Notice that~\eqref{eq:3.31} means $x\in [0, 1]$ the values of
$\omega$ are the same on the entire trajectory of $x$ under
irrational rotation of the unit circle $[0, 1]$ by the angle
$\log_2t$. Thus, $\omega = const$ and, finally,
\begin{equation}\label{eq:3.32} h(x) = \omega \cdot x^{\log_2g'(0)}.
\end{equation}
Now we are done by Theorem~\ref{th:4}, because~\eqref{eq:3.32}
defines a continuously differentiable function.
\end{proof}

\subsection{The
case when the pre-image of $0$ under the action of the tent map is
dot dense in $[0, 1]$}

Remind that, by Theorem~\ref{th:2}, a unimodal surjective map is
topologically conjugated with the Tent map if and only if the
complete pre-image of $0$ under its action is dense in $[0, 1]$.

Thus, let $Z\subset [0, 1]$ be an open interval such that
$g^n(x)\neq 0$ for all $x\in Z$ and $n\geq 0$.

\begin{lemma}\label{lema:3.20}
Suppose that $g^{-\infty}(0)\cap Z = \emptyset$ and $\psi:\, [0,
1]\rightarrow [0, 1]$ be continuous surjective map, which commutes
with $g$. Then for any connected open interval $J\subset [0, 1]$
such that $\psi(J) = Z$ we have that $g^{-\infty}(0)\cap J =
\emptyset$.
\end{lemma}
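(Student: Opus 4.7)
My plan is a short argument by contradiction that simply transports an alleged preimage of $0$ through $\psi$ into the forbidden set $Z$. Suppose, towards a contradiction, that some $x \in J$ lies in $g^{-\infty}(0)$, so $g^n(x) = 0$ for some $n \geq 1$. I want to derive a violation of the standing hypothesis $g^{-\infty}(0) \cap Z = \emptyset$.

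The first step is to iterate the commutation relation~\eqref{eq:1.2} to $\psi \circ g^n = g^n \circ \psi$ (by an immediate induction on $n$) and evaluate at $x$, giving
\[
g^n(\psi(x)) \;=\; \psi(g^n(x)) \;=\; \psi(0).
\]
Since $\psi(x) \in \psi(J) = Z$ and the defining property of $Z$ is that $g^m(y) \neq 0$ for every $y \in Z$ and every $m \geq 0$, the left-hand side above is nonzero, so $\psi(0) \neq 0$.

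The second step is to close the contradiction by invoking Lemma~\ref{lema:3.3}, which asserts $\psi(0) = 0$. The only subtlety, and essentially the only obstacle, is that Lemma~\ref{lema:3.3} was stated and proved for a piecewise linear unimodal commutator $\psi$, whereas Lemma~\ref{lema:3.20} is phrased only for a continuous surjective $\psi$. In the setting of Section~3.3, $\psi$ is precisely the piecewise linear commutator the paper studies, so Lemma~\ref{lema:3.3} applies as stated; if one wanted to argue in the full generality in which the lemma is phrased, one would plug $x=0$ into~\eqref{eq:1.2} to see that $\psi(0)$ must be a fixed point of $g$ (compare Lemma~\ref{lema:3.1}) and then rule out the positive fixed point $x_0$ by an argument analogous to the one given in the proof of Lemma~\ref{lema:3.3}.
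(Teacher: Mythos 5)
Your proposal is correct and is essentially identical to the paper's own proof: iterate~\eqref{eq:1.2} to $\psi\circ g^n=g^n\circ\psi$, evaluate at a hypothetical $x\in J\cap g^{-\infty}(0)$, and contradict $\psi(0)=0$ using the fact that $\psi(x)\in Z$ forces $g^n(\psi(x))\neq 0$. Your extra remark about the applicability of Lemma~\ref{lema:3.3} to a merely continuous surjective $\psi$ is a reasonable point of care that the paper silently skips, but it does not change the argument.
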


\begin{proof}
It follows from~\eqref{eq:1.2} that $$ \psi \circ g^n = g^n\circ
\psi $$ for all $n\geq 0$. Suppose that $x\in J$ is such that
$g^n(x)=0$. That $(\psi \circ g^n)(x) = \psi(0)=0$, but $(g^n\circ
\psi)(x)\neq 0$, because $\psi(x)\in Z$.
\end{proof}

\begin{remark}\label{rem:3.21}
It follows from definitions that if $g^{-\infty}(0)\cap Z =
\emptyset$ for some open interval $Z\subset [0, 1]$, then
$g^{-\infty}(0)\cap g^{-1}(Z) = \emptyset$. \end{remark}

\begin{proof}[Proof of Theorem~\ref{th:1}]

As above, let $Z\subset [0, 1]$ be an open interval such that
$g^n(x)\neq 0$ for all $x\in Z$ and $n\geq 0$.

 Denote $\psi_l$ the maximal monotone part of $\psi$, whose
domain contains $0$. Then, by Lemma~\ref{lema:3.20} and
Remark~\ref{rem:3.21} we have that $\psi_l^{-n}(Z)\cap
g^{-\infty}(0) =\emptyset$ and $g_l^{-n}(Z)\cap g^{-\infty}(0)
=\emptyset$

Denote $x_0\in (0, 1)$ such that $g(x_0)=1$, let $x_i =
g_l(x_{i-1})$ for $i\geq 1$ and $I_i = (x_{i+1}, x_i)$ for all
$i\geq 1$.

Let $k$ be such that both $g$ and $\psi$ are linear on $(0, x_k)$.
Then $$I_{k+t} = \left(\frac{x_{k}}{(g'(0))^{t+1}},\,
\frac{x_{k}}{(g'(0))^{t}}\right),\ t\geq 0.$$

Define $\nu:\, (0, x_k)\rightarrow [0, 1)$ as $$ \nu(x)
=\frac{1}{x_k}\cdot \left\{\log_{g'(0)}x\right\},
$$ where $\{\cdot \}$ denotes a fractional part of a number.
Clearly, $$\nu\circ g_l^{-1} =\nu,$$ and $$ \nu\circ \psi_l^{-1} =
\frac{1}{x_k}\cdot \left\{\log_{g'(0)}x
-\log_{g'(0)}\psi'(0)\right\},$$ which is irrational rotation of a
unit circle, whenever $\psi$ is not an iteration of $g$.

Since any trajectory is dense in $[0, 1]$ under an irrational,
then rotation, then $[0, x_k)\cap g^{-\infty}(0) =
g_l^{-\infty}(0)$, which is a contradiction, because the bigger
pre-image of the maximum point of $g$ has also have pre-images in
$(0, x_k)$.
\end{proof}

\setlength{\unitlength}{1pt}

\pagestyle{empty}
\bibliography{Ds-Bib}{}
\bibliographystyle{makar}

\newpage
\tableofcontents

\end{document}